\newcommand{\klockan}{\the\hours:{\ifnum\minutes<10 0\fi}\the\minutes}
\newcommand{\tid}{\today\ \klockan}
\newcommand{\prtid}{\smash{\raise 10mm \hbox{\LaTeX ed \tid}}}
\renewcommand{\prtid}{}
\let\@@evenhead\@evenhead
\let\@@oddhead\@oddhead
\def\@evenhead{\@@evenhead{\footnotesize\rm\llap{\prtid}}}
\def\@oddhead{{\footnotesize\rm\llap{\prtid}}\@@oddhead}
\def\@oddhead{\@@oddhead{\footnotesize\rm\llap{\prtid}}}
\newtheoremstyle{mydefinitionstyle}
  {1.5ex}        
  {1.5ex}        
  {\rmfamily}    
  {}             
  {\bfseries}    
  {.}            
  {.5em}         
  {}             
\newtheoremstyle{mytheoremstyle}
  {1.5ex}        
  {1.5ex}        
  {\itshape}     
  {}             
  {\bfseries}    
  {.}            
  {.5em}         
  {}             
\theoremstyle{mytheoremstyle}
\newtheorem{theorem}{Theorem}[section]
\newtheorem{lemma}[theorem]{Lemma}
\newtheorem{proposition}[theorem]{Proposition}
\newtheorem{corollary}[theorem]{Corollary}
\theoremstyle{mydefinitionstyle}
\newtheorem{definition}[theorem]{Definition}
\newtheorem{remark}[theorem]{Remark}
\numberwithin{equation}{section}
\let\Enumerate=\enumerate
\renewcommand{\enumerate}{\Enumerate%
\renewcommand{\theenumi}{\textup{(\alph{enumi})}}%
\renewcommand{\labelenumi}{\theenumi}%
}
\def\mdots@{\mathinner.\nonscript\!.
 \ifx\next,.\else\ifx\next;.\else\ifx\next..\else
 \nonscript\!\mathinner.\fi\fi\fi}
\let\ldots\mdots@
\let\cdots\mdots@
\let\dotso\mdots@
\let\dotsb\mdots@
\let\dotsm\mdots@
\let\dotsc\mdots@
\def\vdots{\vbox{\baselineskip2.8\p@ \lineskiplimit\z@
    \kern6\p@\hbox{.}\hbox{.}\hbox{.}\kern3\p@}}
\def\ddots{\mathinner{\mkern1mu\raise8.6\p@\vbox{\kern7\p@\hbox{.}}
    \raise5.8\p@\hbox{.}\raise3\p@\hbox{.}\mkern1mu}}
\def\cdots{\mathinner{\mkern1mu{\cdot}{\cdot}{\cdot}\mkern1mu}}
\gdef\eeaa#1pt{#1}}
\def\accentadjtext#1{\setbox0\hbox{$#1$}\kern
                \expandafter\eeaa\the\fontdimen1\textfont1 \ht0 }
\def\accentadjscript#1{\setbox0\hbox{$#1$}\kern
                \expandafter\eeaa\the\fontdimen1\scriptfont1 \ht0 }
\def\accentadjscriptscript#1{\setbox0\hbox{$#1$}\kern
                \expandafter\eeaa\the\fontdimen1\scriptscriptfont1 \ht0 }
\def\accentadjtextback#1{\setbox0\hbox{$#1$}\kern
                -\expandafter\eeaa\the\fontdimen1\textfont1 \ht0 }
\def\accentadjscriptback#1{\setbox0\hbox{$#1$}\kern
                -\expandafter\eeaa\the\fontdimen1\scriptfont1 \ht0 }
\def\accentadjscriptscriptback#1{\setbox0\hbox{$#1$}\kern
                -\expandafter\eeaa\the\fontdimen1\scriptscriptfont1 \ht0 }
\def\itoverline#1{{\mathsurround0pt\mathchoice
        {\rlap{$\accentadjtext{\displaystyle #1}
                \accentadjtext{\vrule height1.593pt}
                \overline{\phantom{\displaystyle #1}
                \accentadjtextback{\displaystyle #1}}$}{#1}}
        {\rlap{$\accentadjtext{\textstyle #1}
                \accentadjtext{\vrule height1.593pt}
                \overline{\phantom{\textstyle #1}
                \accentadjtextback{\textstyle #1}}$}{#1}}
        {\rlap{$\accentadjscript{\scriptstyle #1}
                \accentadjscript{\vrule height1.593pt}
                \overline{\phantom{\scriptstyle #1}
                \accentadjscriptback{\scriptstyle #1}}$}{#1}}
        {\rlap{$\accentadjscriptscript{\scriptscriptstyle #1}
                \accentadjscriptscript{\vrule height1.593pt}
                \overline{\phantom{\scriptscriptstyle #1}
                \accentadjscriptscriptback{\scriptscriptstyle #1}}$}{#1}}}}
\def\vint{\mathop{\mathchoice%
          {\setbox0\hbox{$\displaystyle\intop$}\kern 0.22\wd0
           \vcenter{\hrule width 0.6\wd0}\kern -0.82\wd0}
          {\setbox0\hbox{$\textstyle\intop$}\kern 0.2\wd0
           \vcenter{\hrule width 0.6\wd0}\kern -0.8\wd0}
          {\setbox0\hbox{$\scriptstyle\intop$}\kern 0.2\wd0
           \vcenter{\hrule width 0.6\wd0}\kern -0.8\wd0}
          {\setbox0\hbox{$\scriptscriptstyle\intop$}\kern 0.2\wd0
           \vcenter{\hrule width 0.6\wd0}\kern -0.8\wd0}}
          \mathopen{}\int}
\newcommand{\limminus}{{\mathchoice{\raise.17ex\hbox{$\scriptstyle -$}}
                {\raise.17ex\hbox{$\scriptstyle -$}}
                {\raise.1ex\hbox{$\scriptscriptstyle -$}}
                {\scriptscriptstyle -}}}
\newcommand{\limplus}{{\mathchoice{\raise.17ex\hbox{$\scriptstyle +$}}
                {\raise.17ex\hbox{$\scriptstyle +$}}
                {\raise.1ex\hbox{$\scriptscriptstyle +$}}
                {\scriptscriptstyle +}}}
\newcommand{\calK}{\mathcal{K}}
\newcommand{\calM}{\mathcal{M}}
\newcommand{\Q}{\mathbb{Q}}
\newcommand{\R}{\mathbb{R}}
\newcommand{\eR}{{\overline{\R\kern-0.08em}\kern 0.08em}} 
\def\cprime{{\mathsurround0pt$'$}}		
\DeclareMathOperator{\Div}{div}
\DeclareMathOperator{\diam}{diam}
\DeclareMathOperator{\Lip}{Lip}
\newcommand{\Lipc}{{\Lip_c}}
\DeclareMathOperator{\Mod}{Mod}
\newcommand{\Modp}{{\Mod_p}}
\DeclareMathOperator*{\essliminf}{ess\,lim\,inf}
\DeclareMathOperator*{\essinf}{ess\,inf}
\newcommand{\setm}{\setminus}
\renewcommand{\emptyset}{\varnothing}
\newcommand{\Cp}{{C_p}}
\newcommand{\pp}{{$p\mspace{1mu}$}}   
\newcommand{\bdy}{\partial}
\newcommand{\loc}{_{\rm loc}}
\newcommand{\dmu}{d\mu}
\newcommand{\ds}{ds}
\newcommand{\dx}{dx}
\newcommand{\eps}{\varepsilon}
\renewcommand{\phi}{\varphi}
\newcommand{\Lp}{L^{p}}
\newcommand{\Lploc}{L^{p}\loc}
\newcommand{\Lq}{L^{q}}
\newcommand{\Dp}{D^p}
\newcommand{\Dploc}{D^{p}\loc}
\newcommand{\Np}{N^{1,p}}
\newcommand{\Nploc}{N^{1,p}\loc}
\newcommand{\tNp}{\widetilde{N}^{1,p}}
\newcommand{\K}{\calK}
\newcommand{\Kb}{\calK^{\:\!_\mathrm{B}}}
\newcommand{\Hp}{H}
\begin{document}

\author{Daniel Hansevi}  
\address{Department of Mathematics, 
		Link\"oping University, 
		SE--581 83 Link{\"o}ping, 
		Sweden}
\email{daniel.hansevi@liu.se}

\title[The obstacle problem for unbounded sets in metric spaces]{
		The obstacle and Dirichlet problems \\ 
		associated with \pp-harmonic functions \\ 
		in unbounded sets in $\R^n$ and metric spaces}

\begin{abstract}
We study the obstacle problem for unbounded sets 
in a proper metric measure space supporting a $(p,p)$-Poincar\'e inequality. 
We prove that there exists a unique solution. 
We also prove that if the measure is doubling 
and the obstacle is continuous, 
then the solution is continuous, 
and moreover \pp-harmonic in the set where it does not touch the obstacle. 
This includes, as a special case, 
the solution of the Dirichlet problem for \pp-harmonic functions 
with Sobolev type boundary data.
\end{abstract}

\subjclass[2010]{Primary: 31E05; 
Secondary: 31C45, 35D30, 35J20, 35J25, 35J60, 
		47J20, 49J40, 49J52, 49Q20, 58J05, 58J32}

\keywords{Dirichlet problem, 
Dirichlet space, 
doubling measure, 
metric space, 
minimal \pp-weak upper gradient, 
Newtonian space, 
nonlinear, 
obstacle problem, 
\pp-harmonic, 
Poincar\'e inequality, 
potential theory, 
upper gradient}

\date{\today}

\maketitle

\section{Introduction} 
The classical Dirichlet problem is the problem of finding a harmonic function, 
that is, a solution of the Laplace equation 
that takes prescribed boundary values. 
According to Dirichlet's principle, 
this is equivalent to minimizing the Dirichlet energy integral, 
\[
	\int_\Omega|\nabla u|^2\,\dx, 
\]
among all functions $u$, in the domain $\Omega$, 
that have the required boundary values 
and continuous partial derivatives up to the second order. 

A more general (nonlinear) Dirichlet problem 
considers the \pp-Laplace equation, 
\[
	\Delta_p u 
	:= \Div(|\nabla u|^{p-2}\,\nabla u) 
	= 0,\ \
	1<p<\infty 
\]
(which reduces to the Laplace equation when $p=2$). 
Solving this problem is equivalent to 
the variational problem of minimizing the \pp-energy integral, 
\[
	\int_\Omega|\nabla u|^p\,\dx, 
\]
among all admissible functions $u$, 
and a minimizer/solution that is continuous is said to be \emph{\pp-harmonic}. 

The nonlinear potential theory of \pp-harmonic functions 
has been studied since the 1960s. 
Initially for $\R^n$, and later generalized to weighted $\R^n$, 
Riemannian manifolds, and other settings.  
The interested reader may consult the monograph 
Heinonen--Kilpel\"ainen--Martio~\cite{HeKiMa06} 
for a thorough treatment in weighted $\R^n$. 

It is not clear how to employ partial differential equations 
in a general metric measure space. 
However, 
by using the notion of minimal \pp-weak upper gradients, 
as substitutes for the modulus of the usual gradients, 
the variational approach becomes available. 
This has led to the more recent development of nonlinear potential theory 
on complete metric spaces equipped with a 
doubling measure supporting a \pp-Poincar\'e inequality. 

In this paper, 
instead of just studying the Dirichlet problem for \pp-harmonic functions, 
we study the associated obstacle problem 
with a given obstacle and given boundary values. 
We minimize the \pp-energy integral 
among admissible functions lying above the obstacle $\psi$. 
This problem reduces to the Dirichlet problem when $\psi\equiv-\infty$. 
The obstacle problem has been studied 
for \emph{bounded} sets in (weighted) $\R^n$ 
(see, e.g., Heinonen--Kilpel\"ainen--Martio~\cite{HeKiMa06} 
and the references therein) and later also in metric spaces 
(see, e.g., 
Bj\"orn--Bj\"orn~\cite{BjBj06}, \cite{Boken}, \cite{BjBj12a}, 
Bj\"orn--Bj\"orn--M\"ak\"al\"ainen--Parviainen~\cite{BjBjMaPa09}, 
Bj\"orn--Bj\"orn--Shanmugalingam~\cite{BjBjSh03b}, 
Eleuteri--Farnana--Kansanen--Korte~\cite{ElFaKaKo10}, 
Farnana~\cite{Farnana09}, \cite{Farnana10a}, 
\cite{Farnana10b}, \cite{Farnana11}, 
Kinnunen--Martio~\cite{KiMa02}, 
Kinnunen--Shanmugalingam~\cite{KiSh06}, and 
Shanmugalingam~\cite{Shanmugalingam01}).

Suppose that $\Omega$ is a nonempty (possibly unbounded) open subset of 
a proper metric measure space that supports a $(p,p)$-Poincar\'e inequality. 
Furthermore, suppose that the capacity of the complement of $\Omega$ is nonzero 
(this is needed for the boundary data to make sense). 
Let $\psi$ be an extended real-valued function and 
let $f$ be a function in $\Dp(\Omega)$ 
(see Section~\ref{1-section-prel} for definitions). 
In this setting, we prove Theorem~\ref{1-thm-obst-existence-uniqueness}, 
which asserts that there exists a unique 
\textup{(}up to sets of capacity zero\textup{)}
solution of the $\K_{\psi,f}(\Omega)$-obstacle problem 
whenever the space of admissible functions is nonempty. 

Moreover, by adding the assumption of the measure being doubling, 
we obtain Theorem~\ref{1-thm-obst-solve-cont}, 
which, as a special case, implies that there is a unique solution 
of the Dirichlet problem for \pp-harmonic functions 
with boundary values in $\Dp(\Omega)$ taken in Sobolev sense 
(i.e., that the $\K_{\psi,f}(\Omega)$-obstacle problem 
has a unique \emph{continuous} solution 
whenever $\psi\equiv-\infty$). 

To the best of the author's knowledge, 
these results are new also for $\R^n$.

\medskip

\section{Notation and preliminaries} 
\label{1-section-prel} 
We assume throughout the paper that $(X,\calM,\mu,d)$ 
is a metric measure space (which we will refer to as $X$) 
equipped with a metric $d$ and a measure $\mu$ such that 
\[
	0 < \mu(B) < \infty
\] 
for all balls $B$ in $X$ 
(we make the convention that balls are nonempty and open). 
The $\sigma$-algebra $\calM$ on which $\mu$ is defined 
is the completion of the Borel $\sigma$-algebra. 

We start with the assumption that $1\leq p<\infty$. 
However, in the next section (and for the rest of the paper), 
we will assume that $1<p<\infty$.

The measure $\mu$ is said to be \emph{doubling} if there exists 
a constant $C_\mu\geq 1$ such that 
\[
	0 < \mu(2B) \leq C_\mu\mu(B) < \infty 
\]
for all balls $B$ in $X$. 
We use the notation that if $B$ is a ball with radius $r$, 
then the ball with radius $\lambda r$ that is concentric with $B$ 
is denoted by $\lambda B$.

The characteristic function $\chi_E$ of a set $E$ is defined by 
$\chi_E(x)=1$ if $x\in E$ and $\chi_E(x)=0$ if $x\notin E$. 
The set $E$ is compactly contained in $A$ 
if $\itoverline{E}$ (the closure of $E$) is a compact subset of $A$. 
We denote this by $E\Subset A$. 
The extended real number system is denoted by \smash{$\eR:=[-\infty,\infty]$}. 
Recall that $f_\limplus=\max\{f,0\}$ and $f_\limminus=\max\{-f,0\}$, 
and hence that $f=f_\limplus-f_\limminus$ and $|f|=f_\limplus+f_\limminus$. 

By a \emph{curve} in $X$,  
we mean a rectifiable nonconstant continuous mapping 
$\gamma$ from a compact interval into $X$. 
Since our curves have finite length, 
they may be parametrized by arc length, 
and we will always assume that this has been done. 
We will abuse notation and denote both the mapping and the image by $\gamma$.

Unless otherwise stated, 
the letter $C$ will be used to denote various positive constants 
whose exact values are unimportant and may vary with each usage. 

\medskip

We follow 
Heinonen--Koskela~\cite{HeKo96}, \cite{HeKo98} 
in introducing upper gradients. 
(Heinonen and Koskela, however, called them very weak gradients.) 
\begin{definition} \label{1-def-ug}
A Borel function $g\colon X\to[0,\infty]$ is said to be an 
\emph{upper gradient} 
of a function $f\colon X\to\eR$ whenever 
\begin{equation}\label{1-upper-gradient-ineq}
	|f(x) - f(y)| 
	\leq \int_\gamma g\,\ds
\end{equation}
holds for all pairs of points $x,y\in X$ 
and every curve $\gamma$ in $X$ joining $x$ and $y$. 
We make the convention that the left-hand side is infinite 
when at least one of the terms is.
\end{definition}
Recall that a Borel function 
$g\colon X\to Y$ is a function such that the inverse image  
$g^{-1}(G)=\{x\in X:g(x)\in G\}$ is a Borel set 
for every open subset $G$ of $Y$. 

Observe that upper gradients are not unique 
(if we add a nonnegative Borel function to an upper gradient of $f$, 
then we obtain a new upper gradient of $f$) 
and that $g\equiv\infty$ is an upper gradient of all functions. 
Note also that if $g$ and $\tilde g$ are upper gradients of 
$u$ and $\tilde u$, respectively, 
then $g-\tilde g$ is not in general an upper gradient of $u-\tilde u$. 
However, upper gradients are subadditive, that is, 
if $g$ and $\tilde g$ are upper gradients of $u$ and $\tilde u$, 
respectively, and $\alpha\in\R$, then 
$|\alpha|g$ and $g+\tilde g$ are upper gradients of 
$\alpha u$ and $u+\tilde u$, respectively. 

A drawback of upper gradients is that 
they are not preserved by $\Lp$-convergence. 
Fortunately, 
it is possible to overcome this problem by relaxing the conditions. 
Therefore, we define the \pp-modulus of a curve family,    
and then follow Koskela--MacManus~\cite{KoMac98} 
in introducing \pp-weak upper gradients.
\begin{definition} \label{1-def-pmod}
Let $\Gamma$ be a family of curves in $X$. 
The \emph{\pp-modulus} of $\Gamma$ is  
\[
	\Modp(\Gamma) 
	:= \inf_\rho\int_X\rho^p\,\dmu, 
\]
where the infimum is taken over all nonnegative Borel functions $\rho$ 
such that 
\[
	\int_\gamma\rho\,\ds\geq 1\ \
	\text{for all curves }\gamma\in\Gamma.
\] 

Whenever a property holds for all curves 
except for a curve family of zero \pp-modulus, 
it is said to hold for \emph{\pp-almost every} (\emph{\pp-a.e.}) curve.
\end{definition}
The \pp-modulus (as the module of order $p$ of a system of measures) 
was defined and studied by Fuglede~\cite{Fuglede57}. 
Heinonen--Koskela~\cite{HeKo98} defined the \pp-modulus of a curve family 
in a metric measure space and observed that the corresponding results 
by Fuglede carried over directly. 

The \pp-modulus has the following properties (as observed in \cite{HeKo98}): 
$\Modp(\emptyset)=0$, 
$\Modp(\Gamma_1)\leq\Modp(\Gamma_2)$ whenever $\Gamma_1\subset\Gamma_2$, 
and $\Modp\bigl(\bigcup_{j=1}^\infty\Gamma_j\bigr)
		\leq\sum_{j=1}^\infty\Modp(\Gamma_j)$. 
If $\Gamma_0$ and $\Gamma$ are two curve families such that 
every curve $\gamma\in\Gamma$ has a subcurve $\gamma_0\in\Gamma_0$, 
then $\Modp(\Gamma)\leq\Modp(\Gamma_0)$. 
For proofs of these properties and all other results in this section, 
we refer to Bj\"orn--Bj\"orn~\cite{Boken}. 
(Some of the references that we mention below 
may not provide a proof in the generality considered here, 
but such proofs are given in \cite{Boken}.) 
\begin{definition} \label{1-def-pwug}
A measurable function $g\colon X\to[0,\infty]$ 
is said to be a \emph{\pp-weak upper gradient} 
of a function $f\colon X\to\eR$ 
if \eqref{1-upper-gradient-ineq} 
holds for all pairs of points $x,y\in X$ 
and \pp-a.e.\ curve $\gamma$ in $X$ 
joining $x$ and $y$. 
\end{definition}
Note that a \pp-weak upper gradient, 
as opposed to an upper gradient, 
is \emph{not} required to be a Borel function. 
It is convenient to demand upper gradients to be Borel functions, 
since then the concept of upper gradients becomes independent of the measure, 
and all considered curve integrals will be defined. 
The situation is a bit different for \pp-weak upper gradients, 
as the curve integrals need only be defined for \pp-a.e.\ curve, 
and therefore, 
it is in fact enough to require that \pp-weak upper gradients 
are measurable functions. 
There is no disadvantage in assuming only measurability, 
since the concept of \pp-weak upper gradients 
would depend on the measure anyway
(as the \pp-modulus depends on the measure). 
The advantage is that some results become more appealing 
(see, e.g., Bj\"orn--Bj\"orn~\cite{Boken}).

Since the \pp-modulus is subadditive, 
it follows that \pp-weak upper gradients 
share the subadditivity property with upper gradients. 
\begin{definition} \label{1-def-Dp}
The \emph{Dirichlet space} on $X$, denoted by $\Dp(X)$, 
is the space of all extended real-valued functions on $X$ 
that are everywhere defined, measurable, 
and have upper gradients in $\Lp(X)$.
\end{definition}
If $E$ is a measurable set, 
then we can consider $E$ to be a metric space in its own right 
(with the restriction of $d$ and $\mu$ to $E$). 
Thus the Dirichlet space $\Dp(E)$ 
is also given by Definition~\ref{1-def-Dp}. 
Note, however, that the collection of upper gradients 
with respect to $E$ can differ from those 
with respect to $X$ (unless $E$ is open).

The local Dirichlet space is defined analogously 
to the local space $\Lploc(X)$. 
Thus we say that a function $f$ on $X$ belongs to $\Dploc(X)$ 
if for every $x\in X$ 
there is a ball $B$ such that 
$x\in B$ and $f\in\Dp(B)$.

Lemma~2.4 in Koskela--MacManus~\cite{KoMac98} asserts that 
if $g$ is a \pp-weak upper gradient of a function $f$, 
then for all $q$ such that $1\leq q\leq p$, there is a decreasing sequence 
$\{g_j\}_{j=1}^\infty$ of upper gradients of $f$ such that  
$\|g_j-g\|_{\Lq(X)}\to 0$ as $j\to\infty$. 
This implies that a measurable function belongs to $\Dp(X)$ 
whenever it (merely) has a \pp-weak upper gradient in $\Lp(X)$. 

If $u$ belongs to $\Dp(X)$, then $u$ has a 
\emph{minimal \pp-weak upper gradient} $g_u\in\Lp(X)$. 
It is minimal in the sense that $g_u\leq g$ a.e.\ 
for all \pp-weak upper gradients $g$ of $u$. 
This was proved for $p>1$ by Shanmugalingam~\cite{Shanmugalingam01}  
and $p\geq 1$ by Haj\l{}asz~\cite{Hajlasz03}. 
Minimal \pp-weak upper gradients $g_u$ are true substitutes 
for $|\nabla u|$ in metric spaces.

One of the important properties of minimal \pp-weak gradients is 
that they are local in the sense that if two functions $u,v\in\Dp(X)$ 
coincide on a set $E$, then $g_u=g_v$ a.e.\ on $E$.
Moreover, if $U=\{x\in X:u(x)>v(x)\}$, 
then $g_u\chi_U+g_v\chi_{X\setm U}$ 
is a minimal \pp-weak upper gradient of $\max\{u,v\}$, 
and $g_v\chi_U+g_u\chi_{X\setm U}$ 
is a minimal \pp-weak upper gradients of $\min\{u,v\}$.
These results are from Bj\"orn--Bj\"orn~\cite{BjBj04}. 

It is well-known that the restriction of a minimal \pp-weak upper gradient 
to an open subset remains minimal with respect to that subset.
As a consequence, 
the results above about minimal \pp-weak upper gradients 
extend to functions in $\Dploc(X)$ 
having minimal \pp-weak upper gradients in $\Lploc(X)$. 

\medskip

With the help of \pp-weak upper gradients, 
it is possible to define a type of Sobolev space on the metric space $X$. 
This was done by Shanmugalingam~\cite{Shanmugalingam00}. 
We will, however, use a slightly different (semi)norm. 
The reason for this is that when we define the capacity in 
Definition~\ref{1-def-cap}, 
it will be subadditive.
\begin{definition} \label{1-def-Np}
The \emph{Newtonian space} on $X$ is 
\[
	\Np(X) 
	:= \{u\in\Dploc(X):\|u\|_{\Np(X)}<\infty\}, 
\]
where $\|\cdot\|_{\Np(X)}$ is the seminorm defined by 
\[
	\|u\|_{\Np(X)} 
	= \biggl(\int_X|u|^p\,\dmu + \int_X g_u^p\,\dmu\biggr)^{1/p}.
\]
\end{definition}
We emphasize the fact that our Newtonian functions are defined everywhere, 
and not just up to equivalence classes of functions 
that agree almost everywhere.
This is essential for the notion of upper gradients to make sense.

The associated normed space defined by $\tNp(X)=\Np(X)/\sim$, 
where $u\sim v$ if and only if $\|u-v\|_{\Np(X)}=0$, is a Banach space 
(see Shanmugalingam~\cite{Shanmugalingam00}). 
Note that some authors denote the space of the 
everywhere defined functions by $\tNp(X)$, 
and then define the Newtonian space, 
which they denote by $\Np(X)$, 
to be the corresponding space of equivalence classes.

The local space $\Nploc(X)$ and the space $\Np(E)$ 
when $E$ is a measurable set are defined analogously to the Dirichlet spaces.

Recall that a metric space is said to be \emph{proper} 
if all bounded closed subsets are compact. 
In particular, this is true if it is complete and the measure is doubling. 
If $X$ is proper and $\Omega$ is an open subset of $X$, 
then $f\in\Lploc(\Omega)$ 
if and only if 
$f\in\Lp(\Omega')$ 
for all open $\Omega'\Subset \Omega$. 
This is the case also for \smash{$\Dploc$} and \smash{$\Nploc$}.

\medskip

Various definitions of capacities for sets can be found in the literature 
(see, e.g., Kinnunen--Martio~\cite{KiMa96} and 
Shanmugalingam~\cite{Shanmugalingam00}). 
We will use the following definition.
\begin{definition} \label{1-def-cap} 
The (\emph{Sobolev}) \emph{capacity} of a subset $E$ of $X$ 
is 
\[
	\Cp(E) 
	:= \inf_u\|u\|_{\Np(X)}^p,
\]
where the infimum is taken over all 
$u\in\Np(X)$ such that $u\geq 1$ on $E$.
\end{definition}
Whenever a property holds for all points 
except for points in a set of capacity zero, 
it is said to hold \emph{quasieverywhere} (\emph{q.e.}). 
Note that we follow the custom of refraining 
from making the dependence on $p$ explicit here.

Trivially, we have $\Cp(\emptyset)=0$, 
and $\Cp(E_1)\leq\Cp(E_2)$ whenever $E_1\subset E_2$. 
Furthermore, the proof in Kinnunen--Martio~\cite{KiMa96} for 
capacities for Haj\l{}asz--Sobolev spaces on metric spaces can easily 
be modified to show that $\Cp$ is countably subadditive, 
that is, 
$\Cp\bigl(\bigcup_{j=1}^\infty E_j\bigr)\leq\sum_{j=1}^\infty\Cp(E_j)$. 
Thus $\Cp$ is an outer measure. 
Note that $C_p$ is finer than $\mu$ 
in the sense that the capacity of a set may be positive 
even when the measure of the same set equals zero.

Shanmugalingam~\cite{Shanmugalingam00} 
showed that if two Newtonian functions are equal almost everywhere, 
then they are in fact equal quasieverywhere.
This result extends to functions in $\Dploc(X)$. 

When $E$ is a subset of $X$, 
we let $\Gamma_E$ denote the family of all curves in $X$ 
that intersect $E$. 
Lemma~3.6 in Shanmugalingam~\cite{Shanmugalingam00} 
asserts that $\Modp(\Gamma_E)=0$ whenever $\Cp(E)=0$. 
This implies that two functions  
have the same set of \pp-weak upper gradients 
whenever they are equal quasieverywhere.

\medskip

In order to be able to compare boundary values of Dirichlet functions 
(and Newtonian functions), 
we introduce the following spaces.
\begin{definition} \label{1-def-Dp0}
The \emph{Dirichlet space with zero boundary values in $A\setm E$}, 
for subsets $E$ and $A$ of $X$, where $A$ is measurable, 
is 
\[
	\Dp_0(E;A)
	:= \{f|_{E\cap A}:f\in\Dp(A) 
		\text{ and } f=0 \text{ in } A\setm E\}.
\]
The \emph{Newtonian space with zero boundary values in $A\setm E$}, 
denoted by $\Np_0(E;A)$,  
is defined analogously. 

We let $\Dp_0(E)$ and $\Np_0(E)$ denote 
$\Dp_0(E;X)$ and $\Np_0(E;X)$, respectively. 
\end{definition}
The assumption ``$f=0$ in $A\setm E$'' can in fact be replaced by 
``$f=0$ q.e.\ in $A\setm E$'' without changing the obtained spaces.

It is easy to verify that the function spaces that we have introduced 
are vector spaces and lattices. 
This means that if $u,v\in\Dp(X)$ and $a,b\in\R$, 
then we have $au+bv,\max\{u,v\},\min\{u,v\}\in\Dp(X)$, 
and furthermore, as a direct consequence, 
we also have $u_\limplus,u_\limminus,|u|\in\Dp(X)$. 

The following lemma is useful for asserting that certain functions 
belong to a Dirichlet space with zero boundary values. 
\begin{lemma} \label{1-lem-Dp0-police}
Suppose that $E$ is a measurable subset of $X$ and that $u\in\Dp(E)$. 
If there exist two functions $u_1$ and $u_2$ in $\Dp_0(E)$ 
such that $u_1\leq u\leq u_2$ q.e.\ in $E$\textup{,} 
then $u\in\Dp_0(E)$.
\end{lemma}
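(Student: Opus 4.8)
The plan is to prove directly that the zero extension $\bar u$ of $u$---defined by $\bar u=u$ on $E$ and $\bar u=0$ on $X\setm E$---lies in $\Dp(X)$. Since $E$ is measurable and $u$ is measurable on $E$, the function $\bar u$ is measurable on $X$; as it vanishes on $X\setm E$ and restricts to $u$ on $E$, this gives $u\in\Dp_0(E)$ by Definition~\ref{1-def-Dp0}. Because $u_1,u_2\in\Dp_0(E)$, there are $f_1,f_2\in\Dp(X)$ with $f_i=0$ on $X\setm E$ and $f_i|_E=u_i$; then $f_1\le\bar u\le f_2$ q.e.\ in $X$, the inequality on $E$ being the hypothesis and the one on $X\setm E$ being $0\le 0\le 0$. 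Let $N$ be the (capacity zero) set where this fails, so that $\Modp(\Gamma_N)=0$. Fix upper gradients: $g\in\Lp(E)$ of $u$ with respect to $E$, and $g_1,g_2\in\Lp(X)$ of $f_1,f_2$ with respect to $X$; these exist, and are honest upper gradients, by the definitions of $\Dp(E)$ and $\Dp(X)$.

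I claim that $G:=g\chi_E+g_1+g_2$, which belongs to $\Lp(X)$, is a \pp-weak upper gradient of $\bar u$ with respect to $X$; granting this, the Koskela--MacManus result recalled above---that a measurable function with a \pp-weak upper gradient in $\Lp(X)$ belongs to $\Dp(X)$---finishes the argument. To prove the claim, consider a curve $\gamma$ in $X$ joining points $x$ and $y$ and not meeting $N$; such curves are all but a family of zero \pp-modulus. If $\gamma$ lies in $E$, then it is a curve in $E$, so $|\bar u(x)-\bar u(y)|=|u(x)-u(y)|\le\int_\gamma g\,\ds\le\int_\gamma G\,\ds$. Otherwise $\gamma$ meets some point $z\notin E$; split $\gamma$ at $z$ into subcurves $\gamma_1$ from $x$ to $z$ and $\gamma_2$ from $z$ to $y$. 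Since $z\in X\setm E$ we have $f_1(z)=f_2(z)=0$, and since $x\notin N$ we have $f_1(x)\le\bar u(x)\le f_2(x)$ when $x\in E$, while $\bar u(x)=0$ when $x\notin E$; in both cases $-\int_{\gamma_1}g_1\,\ds\le\bar u(x)\le\int_{\gamma_1}g_2\,\ds$, hence $|\bar u(x)|\le\int_{\gamma_1}G\,\ds$. The same reasoning gives $|\bar u(y)|\le\int_{\gamma_2}G\,\ds$, and adding the two estimates yields $|\bar u(x)-\bar u(y)|\le\int_\gamma G\,\ds$, as required.

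The step that needs care is exactly this case split on whether $\gamma$ remains in $E$: the squeeze hypothesis is used precisely to bound $|\bar u|$ at points of $E$ by the oscillation of the extensions $f_1,f_2$, which vanish off $E$, so that a curve leaving $E$ never meets an uncontrolled jump of $\bar u$. A second, more pedestrian point is the distinction between upper gradients with respect to $E$ and with respect to $X$; using honest (Borel) upper gradients, which satisfy the defining inequality for \emph{every} curve in the relevant space and for all its subcurves, avoids having to compare \pp-moduli of curve families in $E$ and in $X$, so the only exceptional curve family in the whole proof is $\Gamma_N$.
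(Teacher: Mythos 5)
Your proof is correct and follows essentially the same route as the paper's: the same zero extension, the same candidate \pp-weak upper gradient $g\chi_E+g_1+g_2$, the same capacity-zero exceptional set, and the same case split on whether the curve stays in $E$. The only (cosmetic) difference is that you bound $|\bar u|$ at each endpoint via the curve integrals of $g_1,g_2$ up to a point $z\notin E$, whereas the paper reduces to the case of one endpoint in $E$ and one outside and uses $|u(x)|\leq|v_1(x)|+|v_2(x)|$; both amount to the same estimate.
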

This was proved for Newtonian functions in open sets in 
Bj\"orn--Bj\"orn~\cite{BjBj06}, 
and with trivial modifications, 
it provides a proof for our version of the lemma. 
For the reader's convenience, we give the proof here.
\begin{proof}
Let $v_1$ and $v_2$ be functions in $\Dp(X)$ such that 
$v_1|_E=u_1$, $v_2|_E=u_2$, 
and $v_1=v_2=0$ outside $E$, 
and let $g_1\in\Lp(X)$ and $g_2\in\Lp(X)$ be upper gradients 
of $v_1$ and $v_2$, respectively.
Let $g\in\Lp(E)$ be an upper gradient of $u$ and define  
\[
	v = \begin{cases}
		u & \text{in } E, \\
		0 & \text{in } X\setm E
	\end{cases}
	\quad\text{and}\quad
	\tilde g = \begin{cases}
		g_1+g_2+g & \text{in } E, \\
		g_1+g_2 & \text{in } X\setm E.
	\end{cases}
\]
To complete the proof, 
it suffices to show that 
$\tilde g\in\Lp(X)$ is a \pp-weak upper gradient of $v$. 

Let $E'$ be a subset of $E$ 
with $\Cp(E')=0$ and such that $u_1\leq u\leq u_2$ in $E\setm E'$. 
Let $\gamma$ be an arbitrary curve in $X\setm E'$ with endpoints $x$ and $y$. 
Then $\Modp(\Gamma_{E'})=0$, 
so the following argument asserts that 
$\tilde g$ is a \pp-weak upper gradient of $v$. 

If $\gamma\subset E\setm E'$, then 
\[
	|v(x)-v(y)|
	= |u(x)-u(y)|
	\leq \int_\gamma g\,\ds
	\leq \int_\gamma\tilde g\,\ds.
\]
On the other hand, 
if $x,y\in X\setm E$, 
then 
\[
	|v(x)-v(y)|
	= 0
	\leq \int_\gamma\tilde g\,ds.
\]
Hence, by splitting $\gamma$ into two parts, 
and possibly reversing the direction, 
we may assume that $x\in E\setm E'$ 
and $y\in X\setm E$. 
Then it follows that 
\begin{align*}
	|v(x)-v(y)|
	&= |u(x)|
	\leq |v_1(x)|+|v_2(x)| 
	= |v_1(x)-v_1(y)|
		+|v_2(x)-v_2(y)| \\
	&\leq \int_\gamma g_1\,ds + \int_\gamma g_2\,\ds 
	\leq \int_\gamma\tilde g\,\ds.
	\qedhere
\end{align*}
\end{proof}
\begin{proposition} \label{1-prop-Dp0-Om-clOm}
Let\/ $\Omega$ be an open subset of $X$. 
Then $\Dp_0(\Omega)=\Dp_0(\Omega;\overline\Omega)$.
\end{proposition}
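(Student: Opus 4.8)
The plan is to establish the two inclusions separately; $\Dp_0(\Omega)\subseteq\Dp_0(\Omega;\overline\Omega)$ is routine, while $\Dp_0(\Omega;\overline\Omega)\subseteq\Dp_0(\Omega)$ carries all the content. For the routine inclusion, given $u\in\Dp_0(\Omega)$ I would write $u=f|_\Omega$ with $f\in\Dp(X)$ and $f=0$ in $X\setm\Omega$, pick an upper gradient $g\in\Lp(X)$ of $f$, note that $g|_{\overline\Omega}$ is an upper gradient of $f|_{\overline\Omega}$ with respect to $\overline\Omega$ (every curve in $\overline\Omega$ is a curve in $X$), so $f|_{\overline\Omega}\in\Dp(\overline\Omega)$, and observe that $\overline\Omega\setm\Omega=\bdy\Omega\subseteq X\setm\Omega$, so that $f|_{\overline\Omega}=0$ there; since $\Omega\cap\overline\Omega=\Omega$, this gives $u=(f|_{\overline\Omega})|_\Omega\in\Dp_0(\Omega;\overline\Omega)$.

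For the substantial inclusion, take $u\in\Dp_0(\Omega;\overline\Omega)$, so $u=h|_\Omega$ with $h\in\Dp(\overline\Omega)$ and $h=0$ on $\bdy\Omega$, and let $g\in\Lp(\overline\Omega)$ be a Borel upper gradient of $h$ with respect to $\overline\Omega$. Extend $h$ and $g$ by zero outside $\overline\Omega$, obtaining $\bar h$ and $\bar g$ on $X$. Then $\bar g$ is Borel (the zero-extension across the closed set $\overline\Omega$ of a Borel function is Borel), $\bar g\in\Lp(X)$, $\bar h=0$ on $X\setm\Omega$, and $\bar h|_\Omega=u$; hence it suffices to show that $\bar g$ is an upper gradient of $\bar h$ on $X$, for then $\bar h\in\Dp(X)$ and $u\in\Dp_0(\Omega)$. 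Before the estimate I would record that $\bdy(\overline\Omega)\subseteq\bdy\Omega$ — since $\Omega$ is open and $\Omega\subseteq\overline\Omega$ we have $\Omega\subseteq\operatorname{int}\overline\Omega$, whence $\bdy(\overline\Omega)=\overline\Omega\setm\operatorname{int}\overline\Omega\subseteq\overline\Omega\setm\Omega=\bdy\Omega$ — so that any point of $\overline\Omega$ that is a limit of points of $X\setm\overline\Omega$ lies in $\bdy\Omega$, where $\bar h$ vanishes.

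The heart of the argument is a variation estimate along a fixed curve $\gamma\colon[0,L]\to X$, parametrized by arc length, which we may assume satisfies $\int_\gamma\bar g\,\ds<\infty$. Put $K=\gamma^{-1}(\overline\Omega)$, which is closed in $[0,L]$; then $\int_\gamma\bar g\,\ds=\int_K g(\gamma(r))\,dr$. The key claim is that
\[
	|\bar h(\gamma(t))-\bar h(\gamma(s))|\leq\int_{[s,t]\cap K}g(\gamma(r))\,dr
	\qquad\text{for all }s<t\text{ in }K.
\]
Granting the claim, the upper gradient inequality for $\bar h$ along $\gamma$ follows by cases: if $\gamma(0),\gamma(L)\in\overline\Omega$, apply the claim with $s=0$, $t=L$; if neither endpoint lies in $\overline\Omega$, the left-hand side is $0$; and if exactly one does, say $\gamma(0)\in\overline\Omega$ and $\gamma(L)\notin\overline\Omega$, then $K\neq\emptyset$ (as $0\in K$), $\tau:=\max K$ satisfies $\tau<L$, and $\gamma(\tau)\in\bdy\Omega$ by the recorded fact, so $\bar h(\gamma(L))=0=\bar h(\gamma(\tau))$ and the claim with $s=0$, $t=\tau$ finishes the estimate. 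To prove the claim, let $s^{*}=\min\bigl(t,\sup\{s'\geq s:[s,s']\subseteq K\}\bigr)$ and $t^{*}=\max\bigl(s,\inf\{t'\leq t:[t',t]\subseteq K\}\bigr)$, so that $[s,s^{*}]\subseteq K$ and $[t^{*},t]\subseteq K$ by closedness of $K$. If $s^{*}\geq t^{*}$ then $[s,t]=[s,s^{*}]\cup[t^{*},t]\subseteq K$, so $\gamma|_{[s,t]}$ is a curve in $\overline\Omega$ and the upper gradient inequality for $h$ gives the claim at once. Otherwise $[s,s^{*}]$ and $[t^{*},t]$ are disjoint subintervals of $[s,t]\cap K$, and one bounds $|\bar h(\gamma(s))|$ and $|\bar h(\gamma(t))|$ separately: if $\bar h(\gamma(s))\neq0$ then $\gamma(s)\in\Omega$, which is open, forcing $s<s^{*}$, while, since $s^{*}<t$, the point $s^{*}$ is a right-hand limit of points of $[0,L]\setm K$, so $\gamma(s^{*})\in\bdy\Omega$ by the recorded fact, and hence $|\bar h(\gamma(s))|=|h(\gamma(s))-h(\gamma(s^{*}))|\leq\int_s^{s^{*}}g(\gamma(r))\,dr$ by the upper gradient inequality along the subcurve $\gamma|_{[s,s^{*}]}$ of $\overline\Omega$; if $\bar h(\gamma(s))=0$ the bound is trivial, and $|\bar h(\gamma(t))|$ is handled symmetrically. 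The triangle inequality $|\bar h(\gamma(t))-\bar h(\gamma(s))|\leq|\bar h(\gamma(s))|+|\bar h(\gamma(t))|$, together with these two bounds and additivity of the integral over the disjoint pieces, then yields the claim.

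The main obstacle is exactly this claim. A curve in $X$ can cross $\bdy\Omega$ infinitely often, so $[0,L]\setm K$ may have infinitely many components and $\gamma$ cannot be split into finitely many subcurves lying alternately inside and outside $\overline\Omega$; thus one cannot hope to control $\bar h\circ\gamma$ by bridging the gaps. What makes the estimate go through is that bridging the gaps is unnecessary: every point of $K$ approached from outside $K$ is mapped by $\gamma$ into $\bdy\Omega$ (by $\bdy(\overline\Omega)\subseteq\bdy\Omega$), where $\bar h=0$, so it is enough to estimate $|\bar h(\gamma(s))|$ and $|\bar h(\gamma(t))|$ individually using only the two $K$-intervals issuing from $s$ and $t$. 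Apart from this, the proof needs only the elementary facts already used above — that the zero-extension of a Borel function across a closed set is Borel, and that a subcurve of an arc-length parametrized curve whose image lies in $\overline\Omega$ is an admissible curve in $\overline\Omega$.
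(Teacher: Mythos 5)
Your proposal is correct and takes essentially the same route as the paper's proof (which is by reference to the argument of Lemma~\ref{1-lem-Dp0-police} and to Proposition~2.39 in Bj\"orn--Bj\"orn~\cite{Boken}): extend by zero and verify the upper gradient inequality for the zero-extended gradient by estimating the two endpoint values separately along subcurves lying in $\overline\Omega$ that terminate on $\bdy\Omega$, where the extension vanishes, rather than trying to bridge the excursions outside $\overline\Omega$. Your explicit handling of curves meeting $\bdy\Omega$ infinitely often, via the closed set $K=\gamma^{-1}(\overline\Omega)$ and the maximal $K$-intervals issuing from $s$ and $t$, is a careful spelling-out of this standard splitting argument and even yields a genuine (not merely \pp-weak) upper gradient, since here no exceptional set is involved.
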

The proof is very similar to the proof of Lemma~\ref{1-lem-Dp0-police} 
(see, e.g., Proposition~2.39 in Bj\"orn--Bj\"orn~\cite{Boken} 
for a corresponding proof 
for Newtonian functions).

\medskip

The next two results from Bj\"orn--Bj\"orn--Parviainen~\cite{BjBjPa10} 
(Lemma~3.2 and Corollary~3.3), 
following from Mazur's lemma (see, e.g., Theorem~3.12 in Rudin~\cite{Rudin91}), 
will play a major role in the existence proof for the obstacle problem.
\begin{lemma} \label{1-lem-mazur-consequence}
Assume that\/ $1<p<\infty$. 
Assume further that $g_j$ is a \pp-weak upper gradient of $u_j$\textup{,} 
$j=1,2,\dots$\,\textup{,} 
and that\/ $\{u_j\}_{j=1}^\infty$ and\/ $\{g_j\}_{j=1}^\infty$ 
are bounded in $\Lp(X)$. 
Then there exist functions $u$ and $g$\textup{,} 
both in $\Lp(X)$\textup{,} 
convex combinations \smash{$v_j=\sum_{i=j}^{N_j}a_{j,i}u_i$} 
with \pp-weak upper gradients 
\smash{$\tilde g_j=\sum_{i=j}^{N_j}a_{j,i}g_i$}\textup{,} 
$j=1,2,\dots$\,\textup{,} 
and a subsequence\/ $\{u_{j_k}\}_{k=1}^\infty$\textup{,} 
such that 
\begin{enumerate}
\item both $u_{j_k}\to u$ and 
	$g_{j_k}\to g$ weakly in $\Lp(X)$ as $k\to\infty$\textup{;} 
\item both $v_j\to u$ and $\tilde g_j\to g$ in $\Lp(X)$ as 
	$j\to\infty$\textup{;}
\item $v_j\to u$ q.e.\ as $j\to\infty$\textup{;}
\item $g$ is a \pp-weak upper gradient of $u$.
\end{enumerate}
\end{lemma}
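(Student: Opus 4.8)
The plan is to derive this as an application of Mazur's lemma (Theorem~3.12 in Rudin~\cite{Rudin91}), combined with the closure properties of \pp-weak upper gradients under $\Lp$-convergence that were recorded earlier. Since $1<p<\infty$, the space $\Lp(X)$ is reflexive, so a bounded sequence has a weakly convergent subsequence. Applying this twice (once to $\{u_j\}$ and once to $\{g_j\}$, or to the pair in the product space $\Lp(X)\times\Lp(X)$, which is also reflexive), I obtain a single subsequence $\{u_{j_k}\}$ along which both $u_{j_k}\to u$ and $g_{j_k}\to g$ weakly in $\Lp(X)$ for some $u,g\in\Lp(X)$. This gives (a). Relabel so that $\{(u_{j_k},g_{j_k})\}$ is itself the sequence we work with.

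Next I would apply Mazur's lemma to the weakly convergent sequence $\{(u_{j_k},g_{j_k})\}$ in $\Lp(X)\times\Lp(X)$: there are convex combinations of the tail, $v_j=\sum_{i=j}^{N_j}a_{j,i}u_{j_i}$ and $\tilde g_j=\sum_{i=j}^{N_j}a_{j,i}g_{j_i}$ (with $a_{j,i}\ge0$ and $\sum_i a_{j,i}=1$, where for notational cleanliness one reindexes back to the original $u_i$, $g_i$ as in the statement), such that $v_j\to u$ and $\tilde g_j\to g$ in $\Lp(X)$ as $j\to\infty$. This is (b). By subadditivity of \pp-weak upper gradients (noted in the excerpt right after Definition~\ref{1-def-pwug}), each $\tilde g_j$ is a \pp-weak upper gradient of $v_j$, since it is a convex — in particular a nonnegative linear — combination of \pp-weak upper gradients $g_i$ of the $u_i$.

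For (d), I would invoke the fundamental closure property: if $v_j\to u$ in $\Lp(X)$ and $\tilde g_j\to g$ in $\Lp(X)$ with each $\tilde g_j$ a \pp-weak upper gradient of $v_j$, then (after passing to a further subsequence so that the convergence is also pointwise q.e., using Lemma~3.6 in Shanmugalingam~\cite{Shanmugalingam00} relating capacity-null sets to modulus-null curve families, together with Fuglede's lemma on $\Lp$-convergent sequences along \pp-a.e.\ curve) $g$ is a \pp-weak upper gradient of $u$. Concretely: along \pp-a.e.\ curve $\gamma$ one has $\int_\gamma \tilde g_j\,\ds\to\int_\gamma g\,\ds$ (Fuglede), and for q.e.\ pair of endpoints $v_j\to u$; passing to the limit in $|v_j(x)-v_j(y)|\le\int_\gamma\tilde g_j\,\ds$ gives the upper gradient inequality for $u$ and $g$ on \pp-a.e.\ curve with endpoints in the full-capacity set, and a standard argument (again via Shanmugalingam's Lemma~3.6 applied to the exceptional endpoint set) upgrades this to \pp-a.e.\ curve in $X$. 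Finally (c), namely $v_j\to u$ q.e., is extracted along the way: pointwise $\Lp$-convergence gives a.e.\ convergence of a subsequence, which refines to q.e.\ convergence by the result (cited in the excerpt) that a.e.\ equality of functions with \pp-weak upper gradients implies q.e.\ equality; a telescoping/Borel--Cantelli argument over the convex combinations upgrades this to q.e.\ convergence of the whole sequence $v_j$, after possibly thinning once more and relabeling.

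The main obstacle is bookkeeping rather than conceptual: one must be careful that the \emph{single} subsequence and the \emph{single} sequence of convex combinations simultaneously witness all four conclusions, which forces the order (first extract the weakly convergent subsequence for (a), then apply Mazur for (b), then thin once more for the pointwise/q.e.\ statements (c) and (d)) and requires reindexing so that $v_j$ and $\tilde g_j$ are written as combinations of the original $u_i$, $g_i$ as in the statement. The genuinely substantive input — that $\Lp$-limits of functions-with-\pp-weak-upper-gradients again have the limit gradient as a \pp-weak upper gradient — is exactly the Fuglede-type completeness argument underlying the Newtonian space theory, and I would cite it from Bj\"orn--Bj\"orn~\cite{Boken} rather than reprove it. Since the statement attributes the result to Bj\"orn--Bj\"orn--Parviainen~\cite{BjBjPa10} (Lemma~3.2 and Corollary~3.3), the cleanest write-up simply assembles these ingredients and refers to that source for the details.
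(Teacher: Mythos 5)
You should note at the outset that the paper does not prove this lemma at all: it is quoted from Bj\"orn--Bj\"orn--Parviainen \cite{BjBjPa10} (Lemma~3.2 and Corollary~3.3), so your proposal can only be measured against the standard argument from that source. Your architecture for (a), (b) and (d) matches it: reflexivity of $\Lp(X)$, applied in the product space, gives the weakly convergent subsequence; Mazur's lemma applied to the pairs $(u_{j_k},g_{j_k})$ gives convex combinations with common coefficients converging strongly; subadditivity makes each $\tilde g_j$ a \pp-weak upper gradient of $v_j$; and Fuglede's lemma lets one pass to the limit in the upper gradient inequality along \pp-a.e.\ curve once the exceptional set of points where $v_j\not\to u$ is known to have capacity zero (so that the family of curves meeting it has zero \pp-modulus).

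The genuine gap is your treatment of (c), which is the only substantive point beyond soft functional analysis, and on which your argument for (d) also leans. Neither mechanism you offer works as stated. The result that a.e.\ equality of Dirichlet/Newtonian functions implies q.e.\ equality concerns a single pair of functions; it does not convert a.e.\ convergence of a sequence into q.e.\ convergence. And the telescoping/Borel--Cantelli argument (the one used to prove completeness of $\Np$) needs a capacitary weak-type estimate of the form $\Cp(\{|v_{j+1}-v_j|>\lambda\})\leq C\lambda^{-p}\|v_{j+1}-v_j\|_{\Np(X)}^p$ with a \emph{small} right-hand side; here the only available \pp-weak upper gradient of $v_{j+1}-v_j$ is $\tilde g_j+\tilde g_{j+1}$, which tends to $2g$, not to $0$. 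Minimal \pp-weak upper gradients are not linear, so, unlike in the classical Banach--Saks argument in (weighted) $\R^n$, fast $\Lp$-convergence of $v_j$ and of $\tilde g_j$ gives no smallness of any upper gradient of the differences, and no thinning or relabeling repairs this. A correct route to (c) (essentially that of \cite{BjBjPa10}) must use the curve structure itself: after thinning so that Fuglede's lemma applies to both $v_j-u$ and $\tilde g_j-g$, one shows that on \pp-a.e.\ curve $\gamma$ the upper gradient inequality forces $\{v_j\}$ to converge at \emph{every} point of $\gamma$ (compare $v_j(x)$ with its averages over short subcurves, using $\int_\gamma|v_j-u|\,ds\to 0$, $\int_\gamma|\tilde g_j-g|\,ds\to 0$ and absolute continuity of $s\mapsto\int g\,ds$); hence the divergence set $E$ satisfies $\mu(E)=0$ and $\Modp(\Gamma_E)=0$, which together give $\Cp(E)=0$, and the pointwise limit along curves is the representative of $u$ for which (c) and (d) hold. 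If, as you suggest at the end, you simply cite \cite{BjBjPa10} for the whole lemma, that is exactly what the paper does; but the intermediate sketch you give for (c) would not survive being written out.
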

Recall that $\alpha_1 v_1+\cdots+\alpha_n v_n$ 
is said to be a convex combination of $v_1,\dots,v_n$ whenever  
$\alpha_k\geq 0$ for all $k=1,\dots,n$ and $\alpha_1+\cdots+\alpha_n=1$. 
\begin{corollary} \label{1-cor-mazur-consequence}
Assume that\/ $1<p<\infty$. 
Assume also that\/ $\{u_j\}_{j=1}^\infty$ is bounded in $\Np(X)$ 
and that $u_j\to u$ q.e.\ on $X$ as $j\to\infty$.
Then $u\in\Np(X)$ and 
\[
	\int_X g_u^p\,\dmu 
	\leq \liminf_{j\to\infty}\int_X g_{u_j}^p\,\dmu.
\]
\end{corollary}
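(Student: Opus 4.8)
The plan is to deduce the corollary from Lemma~\ref{1-lem-mazur-consequence} by applying it to the given sequence $\{u_j\}_{j=1}^\infty$ together with the minimal \pp-weak upper gradients $g_j:=g_{u_j}$. Since $\{u_j\}$ is bounded in $\Np(X)$, both $\{u_j\}$ and $\{g_{u_j}\}$ are bounded in $\Lp(X)$, so the lemma supplies functions $w,g\in\Lp(X)$, convex combinations $v_j=\sum_{i=j}^{N_j}a_{j,i}u_i$ with \pp-weak upper gradients $\tilde g_j=\sum_{i=j}^{N_j}a_{j,i}g_{u_i}$, and a subsequence along which $u_{j_k}\to w$ and $g_{u_{j_k}}\to g$ weakly in $\Lp(X)$, with $v_j\to w$ q.e., $\tilde g_j\to g$ in $\Lp(X)$, and $g$ a \pp-weak upper gradient of $w$.

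First I would identify $w$ with $u$. By hypothesis $u_j\to u$ q.e.\ on $X$, and each $v_j$ is a convex combination of $u_j,\dots,u_{N_j}$ with indices $\to\infty$, so $v_j\to u$ q.e.\ as well; on the other hand Lemma~\ref{1-lem-mazur-consequence}(c) gives $v_j\to w$ q.e. Hence $u=w$ q.e., and since (by the result of Shanmugalingam cited in the excerpt) functions in $\Dploc(X)$ that agree a.e.\ agree q.e., and q.e.-equal functions share the same \pp-weak upper gradients, $g$ is a \pp-weak upper gradient of $u$. Because $g\in\Lp(X)$, the remark after Koskela--MacManus's Lemma~2.4 shows $u$ has an upper gradient in $\Lp(X)$, so $u\in\Dploc(X)$; boundedness of $\{u_j\}$ in $\Lp(X)$ together with q.e.\ (hence a.e.) convergence and Fatou's lemma gives $u\in\Lp(X)$, whence $u\in\Np(X)$.

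It remains to prove the energy inequality. Since $g_u$ is the \emph{minimal} \pp-weak upper gradient of $u$ and $g$ is a \pp-weak upper gradient of $u$, we have $g_u\le g$ a.e., so $\int_X g_u^p\,\dmu\le\int_X g^p\,\dmu$. Now $\tilde g_j\to g$ in $\Lp(X)$, hence $\int_X g^p\,\dmu=\lim_{j\to\infty}\int_X\tilde g_j^p\,\dmu$. Finally $\tilde g_j=\sum_{i=j}^{N_j}a_{j,i}g_{u_i}$ with $\sum_i a_{j,i}=1$, so by the convexity of $t\mapsto t^p$ and Jensen's inequality, $\int_X\tilde g_j^p\,\dmu\le\sum_{i=j}^{N_j}a_{j,i}\int_X g_{u_i}^p\,\dmu\le\sup_{i\ge j}\int_X g_{u_i}^p\,\dmu$; letting $j\to\infty$ the right-hand side tends to $\liminf_{j\to\infty}\int_X g_{u_j}^p\,\dmu$, which yields the claimed bound.

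The main obstacle, and the only nonroutine point, is the identification $u=w$ q.e.: Lemma~\ref{1-lem-mazur-consequence} only guarantees that the convex combinations $v_j$ converge q.e.\ to \emph{some} limit $w$, and one must argue that this limit coincides with the prescribed q.e.-limit $u$ of the original sequence. This follows because each $v_j$ is built from tail elements $u_i$ with $i\ge j$, so pointwise (off a fixed capacity-zero set) $v_j$ is a convex average of numbers converging to $u(x)$ and therefore itself converges to $u(x)$; the only care needed is to take the union of the relevant exceptional sets, which has capacity zero by countable subadditivity of $\Cp$. Everything after that — membership in $\Np(X)$ and the Jensen/minimality chain for the energies — is bookkeeping with the facts already recorded in Section~\ref{1-section-prel}.
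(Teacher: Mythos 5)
The paper itself gives no proof of this corollary: it is imported verbatim from Bj\"orn--Bj\"orn--Parviainen~\cite{BjBjPa10} (their Corollary~3.3), so the only comparison available is with the route through Lemma~\ref{1-lem-mazur-consequence}, and your plan — apply the lemma to $\{u_j\}$ with $g_j=g_{u_j}$, identify the limit $w$ of the convex combinations with $u$ by the tail argument plus countable subadditivity of $\Cp$, conclude $g$ is a \pp-weak upper gradient of $u$ because q.e.-equal functions share \pp-weak upper gradients, and then get $u\in\Np(X)$ via the Koskela--MacManus remark and Fatou — is exactly the intended one, and those steps are sound (the only small point worth recording is that each $u_i$ is finite q.e., so the convex combinations are well defined off a single capacity-zero set).

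There is, however, a genuine error in the final limit of your energy chain. From $\int_X\tilde g_j^p\,\dmu\le\sup_{i\ge j}\int_X g_{u_i}^p\,\dmu$ you may only conclude $\int_X g^p\,\dmu\le\limsup_{j\to\infty}\int_X g_{u_j}^p\,\dmu$, since $\lim_{j\to\infty}\sup_{i\ge j}$ is by definition the limes superior, not the limes inferior; as written your argument proves a weaker statement than the corollary. (Switching to part (a) of the lemma and weak lower semicontinuity of the $\Lp$-norm does not save this, because the subsequence $\{u_{j_k}\}$ is chosen by the lemma and $\liminf_k\int_X g_{u_{j_k}}^p\,\dmu$ may exceed $\liminf_j\int_X g_{u_j}^p\,\dmu$.) The repair is standard but must be said: before invoking Lemma~\ref{1-lem-mazur-consequence}, pass to a subsequence of $\{u_j\}$ along which $\int_X g_{u_j}^p\,\dmu$ converges to $\liminf_{j\to\infty}\int_X g_{u_j}^p\,\dmu$; the hypotheses (boundedness in $\Np(X)$ and q.e.\ convergence to $u$) are inherited by subsequences, and for that subsequence the $\limsup$ you actually control coincides with the original $\liminf$, giving the claimed inequality. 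With this one-line modification your proof is complete.
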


\medskip

In general, the upper gradients of a function 
give no control over the function. 
This is obviously so when there are no curves. 
Requiring a Poincar\'e inequality to hold 
is one possibility of gaining such a control 
by making sure that there are enough curves connecting any two points.
\begin{definition} \label{1-def-poincare-inequality}
Let $q\geq 1$. 
We say that $X$ supports a $(q,p)$-\emph{Poincar\'e inequality} 
(or that $X$ is a $(q,p)$-Poincar\'e space) 
if there exist constants 
$C_{\mathrm{PI}}>0$ and $\lambda\geq 1$ (dilation constant) 
such that for all 
balls $B$ in $X$, 
all integrable functions $u$ on $X$, 
and all upper gradients $g$ of $u$, 
it is true that 
\[
	\biggl(\vint_B|u-u_B|^q\,\dmu\biggr)^{1/q} 
	\leq C_{\mathrm{PI}}\diam(B)
			\biggl(\vint_{\lambda B}g^p\,\dmu\biggr)^{1/p},
\]
where 
\[
	u_B 
	:= \vint_B u\,\dmu 
	:= \frac{1}{\mu(B)}\int_B u\,\dmu.
\] 
For short, 
we say \emph{\pp-Poincar\'e inequality} 
instead of $(1,p)$-Poincar\'e inequality, 
and if $X$ supports a \pp-Poincar\'e inequality, 
we say that $X$ is a \pp-Poincar\'e space. 
\end{definition}
By using H\"older's inequality, one can show that 
if $X$ supports a $(q,p)$-Poincar\'e inequality, 
then $X$ supports a $(\tilde q,\tilde p)$-Poincar\'e inequality 
for all $\tilde q\leq q$ and $\tilde p\geq p$.
From the next section on, 
we will assume $X$ to support a $(p,p)$-Poincar\'e inequality. 
Then we have the following useful assertion 
that implies that a function can be controlled 
by its minimal \pp-weak upper gradient. 
This was proved for Euclidean spaces by 
Maz\cprime ya (see, e.g., \cite{Mazya85}), and later 
J.~Bj\"orn~\cite{BjornJ02} observed that the proof 
goes through also for metric spaces. 
The following version is from Bj\"orn--Bj\"orn~\cite{Boken} (Theorem~5.53).
\begin{theorem}[Maz\cprime ya's inequality.] \label{1-thm-Mazyas-ineq}
Suppose that $X$ supports a $(p,p)$-Poincar\'e inequality. 
Then there exists a constant\/ $C_{\mathrm{MI}}>0$ such that 
if $B$ is a ball in $X$\textup{,} 
$u\in\Nploc(X)$\textup{,} and $S=\{x\in X:u(x)=0\}$\textup{,} 
then 
\[
	\int_{2B}|u|^p\,\dmu 
	\leq \frac{C_{\mathrm{MI}}(\diam{(B)}^p+1)\mu(2B)}{\Cp(B\cap S)}
		\int_{2\lambda B}g_u^p\,\dmu, 
\]
where $\lambda$ is the dilation constant in the 
$(p,p)$-Poincar\'e inequality.
\end{theorem}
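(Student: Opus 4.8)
The plan is to split $u$ on $2B$ into its average $c:=u_{2B}$ and the deviation $u-c$, so that
\[
\int_{2B}|u|^p\,\dmu \le C\int_{2B}|u-c|^p\,\dmu + C|c|^p\mu(2B),
\]
and then to bound the first term by the $(p,p)$-Poincar\'e inequality and the second by the capacity of $B\cap S$. Before doing so I would dispose of trivial cases: we may assume $\Cp(B\cap S)>0$ and $\diam(B)>0$ (otherwise the right-hand side is $+\infty$ or the statement is vacuous) and $\int_{2\lambda B}g_u^p\,\dmu<\infty$ (otherwise there is nothing to prove), so that, by a standard localisation, $u\in\Np(2\lambda B)$. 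I would also fix once and for all a Lipschitz cutoff $\eta$ with $0\le\eta\le1$, $\eta\equiv1$ on $B$, $\eta\equiv0$ outside $2B$, and $g_\eta\le C\diam(B)^{-1}\chi_{2B}$.

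Two preliminary estimates carry most of the argument. First, testing $\Cp$ with $\eta$ itself gives $\Cp(B\cap S)\le\Cp(B)\le\|\eta\|_{\Np(X)}^p\le\mu(2B)(1+C\diam(B)^{-p})$, which rearranges to
\[
\diam(B)^p \le \frac{C(\diam(B)^p+1)\,\mu(2B)}{\Cp(B\cap S)}.
\]
Second, applying the Poincar\'e inequality on the ball $2B$ (and using $\mu(2B)\le\mu(2\lambda B)$, with the factor $\diam(2B)^p$ absorbed into $\diam(B)^p$, equivalently phrasing the Poincar\'e inequality with radii) yields $\int_{2B}|u-c|^p\,\dmu \le C\diam(B)^p\int_{2\lambda B}g_u^p\,\dmu$. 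Combining the two, the deviation term $C\int_{2B}|u-c|^p\,\dmu$ is already bounded by $C(\diam(B)^p+1)\mu(2B)\,\Cp(B\cap S)^{-1}\int_{2\lambda B}g_u^p\,\dmu$; in particular this settles the case $c=0$.

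For the term $C|c|^p\mu(2B)$ with $c\ne0$, I would construct a competitor for $\Cp(B\cap S)$. Put $h:=1-u/c$, so that $g_h=|c|^{-1}g_u$ and $|h|=|u-c|/|c|$, and set $w:=\eta h$. Since $\eta\equiv1$ on $B$ while $u\equiv0$ on $S$, we get $w=1$ on $B\cap S$, so $w$ is admissible for $\Cp(B\cap S)$. By the Leibniz rule for upper gradients, $g_w\le\eta g_h+|h|g_\eta\le|c|^{-1}\bigl(g_u+C\diam(B)^{-1}|u-c|\bigr)\chi_{2B}$, while $|w|\le|h|\chi_{2B}$; hence
\[
\Cp(B\cap S)\le\|w\|_{\Np(X)}^p \le \frac{C}{|c|^p}\Bigl(\int_{2B}g_u^p\,\dmu + (1+\diam(B)^{-p})\int_{2B}|u-c|^p\,\dmu\Bigr).
\]
Feeding in the Poincar\'e estimate for $\int_{2B}|u-c|^p\,\dmu$ and using $\int_{2B}g_u^p\,\dmu\le\int_{2\lambda B}g_u^p\,\dmu$ gives $|c|^p\le C(\diam(B)^p+1)\,\Cp(B\cap S)^{-1}\int_{2\lambda B}g_u^p\,\dmu$. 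Multiplying by $\mu(2B)$ and adding the deviation bound from the previous step gives the theorem, with $C_{\mathrm{MI}}$ the accumulated constant.

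I expect the only step that is not bookkeeping to be the choice $w=\eta h$ together with the identity $|h|=|u-c|/|c|$: this is exactly what makes the Poincar\'e inequality applicable, since it is the deviation $u-c$, and not an uncontrolled quantity like $|u|$, that enters both $|w|$ and the $|h|g_\eta$ part of $g_w$. The remaining ingredients — the Leibniz rule for upper gradients, the admissibility check $w\equiv1$ on $B\cap S$, the comparability of $\diam(2B)$ with $\diam(B)$ (or the use of radii in the Poincar\'e inequality), and the finiteness of the integrals involved — are routine.
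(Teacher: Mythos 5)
This theorem is not proved in the paper at all: it is quoted from Bj\"orn--Bj\"orn~\cite{Boken} (Theorem~5.53), so there is no internal proof to compare against. Your argument is, in essence, the standard Maz\cprime ya capacitary argument that underlies that citation: split $\int_{2B}|u|^p\,\dmu$ into the deviation $|u-u_{2B}|^p$ (controlled by the $(p,p)$-Poincar\'e inequality on $2B$, whose dilated ball is exactly $2\lambda B$) and the mean $|u_{2B}|^p\mu(2B)$ (controlled by testing $\Cp(B\cap S)$ with $\eta(1-u/u_{2B})$), plus the crude bound $\Cp(B\cap S)\leq\|\eta\|_{\Np(X)}^p$ to absorb the $\diam(B)^p$ factor. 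The structure and the bookkeeping are correct, and the competitor $w=\eta(1-u/c)$ with $w=1$ on $B\cap S$ is exactly the right device.

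Two points deserve more care than you give them. First, the assertion that finiteness of $\int_{2\lambda B}g_u^p\,\dmu$ plus $u\in\Nploc(X)$ yields $u\in\Np(2\lambda B)$ ``by a standard localisation'' is not automatic: in this generality $X$ is neither proper nor doubling, so local membership does not obviously globalize over the (possibly noncompact) ball, and you do need $u\in\Lp(2B)$ for $c=u_{2B}$ and for $w\in\Np(X)$ to make sense. The clean fix is to run your argument for the truncations $u_k=\max\{-k,\min\{u,k\}\}$, which are bounded (hence in $\Lp(2B)$ since $\mu(2B)<\infty$), have the same zero set $S$, and satisfy $g_{u_k}\leq g_u$ a.e.; then let $k\to\infty$ by monotone convergence. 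Second, replacing $\diam(2B)$ by $C\diam(B)$ (or phrasing the Poincar\'e inequality with radii and then returning to $\diam(B)$) is false in a general metric space; it is rescued here because a space supporting a Poincar\'e inequality with $0<\mu(B)<\infty$ on balls is connected, whence $\diam(2B)\leq 4\diam(B)$ unless $B=X$, in which case $2B=B$ and there is nothing to absorb. With these two repairs your proof is complete and coincides with the standard one behind the cited result.
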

The following result from Bj\"orn--Bj\"orn~\cite{Boken} 
(Proposition~4.14) 
is also a useful consequence of the $(p,p)$-Poincar\'e inequality. 
\begin{proposition} \label{1-prop-Dploc-Nploc}
Suppose that $X$ supports a $(p,p)$-Poincar\'e inequality. 
Let\/ $\Omega$ be an open subset of $X$. 
Then $\Dploc(\Omega)=\Nploc(\Omega)$.
\end{proposition}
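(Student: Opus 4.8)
The plan is to prove the two inclusions separately. The inclusion $\Nploc(\Omega)\subseteq\Dploc(\Omega)$ needs no Poincar\'e inequality: for every ball $B$ we have $\Np(B)\subseteq\Dp(B)$, since a function $u\in\Np(B)$ has its minimal $p$-weak upper gradient $g_u$ in $\Lp(B)$ and hence, by Lemma~2.4 of Koskela--MacManus quoted above, an \emph{honest} upper gradient in $\Lp(B)$, so $u\in\Dp(B)$; running this over the balls in the definition of $\Nploc(\Omega)$ gives the inclusion. All the work is in the reverse inclusion.

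So fix $f\in\Dploc(\Omega)$ and $x\in\Omega$. Since $\Omega$ is open I may choose $\rho>0$ with $B_0:=B(x,4\lambda\rho)\subseteq\Omega$ and $f\in\Dp(B_0)$; let $g\in\Lp(B_0)$ be an upper gradient of $f$ and let $g_f\le g$ be its minimal $p$-weak upper gradient in $B_0$. It suffices to show that $f\in\Lp(B')$ for some ball $B'\ni x$, for then $f\in\Dp(B')$ and its minimal $p$-weak upper gradient there is dominated by $g$, hence lies in $\Lp(B')$, so $f\in\Np(B')$; since $x$ was arbitrary this gives $f\in\Nploc(\Omega)$. I will use two facts. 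First, $f$ is finite q.e.\ (hence $\mu$-a.e.) in $B_0$; this is a standard consequence of the $(p,p)$-Poincar\'e inequality, which forces the existence of sufficiently many rectifiable curves, and I would simply quote it from Bj\"orn--Bj\"orn~\cite{Boken}. Second, a set of positive measure has positive capacity, which is immediate from Definition~\ref{1-def-cap}. Using finiteness a.e.\ on $B(x,\rho)$ together with $0<\mu(B(x,\rho))<\infty$, pick a ``median'' level $c\in\R$ for which both $\mu(\{f\le c\}\cap B(x,\rho))$ and $\mu(\{f\ge c\}\cap B(x,\rho))$ are at least $\tfrac12\mu(B(x,\rho))$.

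Now I would run a truncation-and-cutoff argument feeding Maz\cprime ya's inequality (Theorem~\ref{1-thm-Mazyas-ineq}). For $k>c$ put $f_k:=\min\{(f-c)_+,k-c\}$; this is a $1$-Lipschitz function of $f$, so $g$ is an upper gradient of it, it is bounded, and it vanishes on $\{f\le c\}$. Fix a Lipschitz cutoff $\eta$ with $\eta\equiv1$ on $B(x,2\lambda\rho)$, $\eta\equiv0$ off $B(x,3\lambda\rho)$, $0\le\eta\le1$ and $\Lip(\eta)\le C/\rho$, and set $w_k:=\eta f_k$, extended by $0$. A Leibniz/gluing argument gives $w_k\in\Np(X)$ with a $p$-weak upper gradient that coincides with $g_{f_k}\le g$ on $B(x,2\lambda\rho)$ (where $\eta$ is locally constant) and is merely required to lie in $\Lp(X)$ on the collar $B(x,3\lambda\rho)\setminus B(x,2\lambda\rho)$; the key point is that the term $f_k\,g_\eta$, which does grow with $k$, is supported in that collar, which the right-hand side of Maz\cprime ya's inequality does not see. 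Applying Theorem~\ref{1-thm-Mazyas-ineq} to $w_k$ on the ball $B(x,\rho)$, with $S=\{w_k=0\}\supseteq\{f\le c\}$ and hence $\Cp(B(x,\rho)\cap S)\ge\tfrac12\mu(B(x,\rho))>0$ uniformly in $k$, yields
\[
\int_{B(x,2\rho)}f_k^p\,\dmu=\int_{B(x,2\rho)}w_k^p\,\dmu\le\frac{C_{\mathrm{MI}}\bigl((2\rho)^p+1\bigr)\mu(B(x,2\rho))}{\tfrac12\mu(B(x,\rho))}\int_{B(x,2\lambda\rho)}g^p\,\dmu,
\]
a bound independent of $k$. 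Since $f$ is finite a.e., $f_k\uparrow(f-c)_+$ a.e.\ on $B(x,2\rho)$, so letting $k\to\infty$ gives $(f-c)_+\in\Lp(B(x,2\rho))$; applying the symmetric argument to $c-f$ (using the other half of the median inequality) gives $(f-c)_-\in\Lp(B(x,2\rho))$. Hence $f\in\Lp(B(x,2\rho))$, and with $g_f\in\Lp(B(x,2\rho))$ this shows $f\in\Np(B(x,2\rho))$, completing the proof.

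I expect the main obstacle to be the two places where the argument leans on ``soft'' but nontrivial input: that a $\Dp$-function is finite quasieverywhere once a $(p,p)$-Poincar\'e inequality is available (which I would quote rather than reprove), and, more importantly for the computation, arranging the cutoff so that the $k$-dependent error gradient $f_k\,g_\eta$ sits strictly outside the dilated ball $B(x,2\lambda\rho)$ appearing on the right in Maz\cprime ya's inequality; once that geometry is in place, the uniform bound and the passage to the limit are routine.
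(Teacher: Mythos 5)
The paper does not actually prove this proposition; it quotes it from Bj\"orn--Bj\"orn~\cite{Boken} (Proposition~4.14), so there is no in-paper argument to match yours against. Your reconstruction is a legitimate and essentially correct route given the toolkit the paper does quote: the easy inclusion $\Nploc(\Omega)\subset\Dploc(\Omega)$ via Koskela--MacManus is fine; and for the converse, the truncation $f_k$, the cutoff $\eta$ with the collar $B(x,3\lambda\rho)\setminus B(x,2\lambda\rho)$ placed outside the set $B(x,2\lambda\rho)$ over which Maz\cprime ya's inequality integrates the gradient, the uniform lower bound $\Cp(B(x,\rho)\cap S)\ge\mu(B(x,\rho)\cap\{f\le c\})\ge\tfrac12\mu(B(x,\rho))$ (capacity dominates measure, as you say), and monotone convergence all check out; the gluing of $\eta f_k$ by zero across the collar is the same curve-splitting argument as in the proof of Lemma~\ref{1-lem-Dp0-police}, so that step, though only sketched, is unproblematic. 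The standard proof of \cite{Boken}, Proposition~4.14, applies the $(p,p)$-Poincar\'e inequality directly to truncations (Maz\cprime ya's inequality is itself derived from the Poincar\'e inequality by a similar cutoff-and-truncation argument), so your proof is a close cousin rather than a genuinely different idea; what it buys is that it runs entirely on statements already displayed in this paper.

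The one point you should not leave as a citation is the ``$f$ is finite q.e.\ (hence a.e.)'' input used to pick the median $c$: the natural reference for that fact in this generality (no doubling, no completeness, only the $(p,p)$-Poincar\'e inequality) is Proposition~4.14 itself or its proof, so quoting it risks circularity. Note, though, that your argument needs much less: it only requires that neither $f=+\infty$ a.e.\ nor $f=-\infty$ a.e.\ on $B(x,\rho)$, since $\mu(\{f\le c\}\cap B(x,\rho))>0$ for a single finite $c$ already gives $(f-c)_\limplus\in\Lp(B(x,2\rho))$, and symmetrically. These degenerate cases can be excluded by an elementary modulus argument: by the convention in Definition~\ref{1-def-ug}, every curve in $B_0$ having a point in $E:=\{y\in B_0:|f(y)|=\infty\}$ admits a subcurve with an endpoint in $E$ and hence satisfies $\int_\gamma g\,\ds=\infty$, so this curve family has \pp-modulus zero because $g\in\Lp(B_0)$; on the other hand, if $\mu(E\cap B')>0$ and $\mu(B'\setm E)>0$ for a suitable small ball $B'$, applying the Poincar\'e inequality to the bounded functions $k\chi_E$ (which have any admissible witness of that zero modulus as an upper gradient) and letting $k\to\infty$ gives a contradiction, while the remaining alternative $\mu(B'\setm E)=0$ forces \pp-a.e.\ nonconstant curve in $B'$ to meet $E$, again contradicting the Poincar\'e inequality's supply of curve families of positive modulus. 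Spelling this out (or locating a genuinely prior reference for it) is the only substantive repair your write-up needs.
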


\medskip

\section{The obstacle problem} 
\label{1-section-obst} 
\emph{In this section\textup{,} 
we assume that\/ $1<p<\infty$\textup{,} 
that $X$ is proper and supports a $(p,p)$-Poincar\'e inequality 
with dilation constant $\lambda$\textup{,} 
and that\/ $\Omega$ is a nonempty open subset of $X$ 
such that $\Cp(X\setm\Omega)>0$.}

\medskip

Kinnunen--Martio~\cite{KiMa02} defined an obstacle problem for 
Newtonian functions in open sets in 
a complete \pp-Poincar\'e space with a doubling measure. 
They proved that there exists a unique solution 
whenever the set is bounded and such that the complement has nonzero measure 
and the set of feasible solutions is nonempty 
(Theorem~3.2 in \cite{KiMa02}). 
Shanmugalingam~\cite{Shanmugalingam00} had earlier solved 
the Dirichlet problem 
(i.e., the obstacle problem with obstacle $\psi\equiv -\infty$).

Roughly, Kinnunen and Martio defined their obstacle as follows. 
\begin{definition} \label{1-def-bounded-obst}
Suppose that $V$ is a nonempty \emph{bounded} open subset of $X$ with  
$\Cp(X\setm V)>0$. 
Let $\psi\colon V\to\eR$ and let $f\in\Np(V)$. 
Define 
\[
	\Kb_{\psi,f}(V) 
	= \{v\in\Np(V):v-f\in\Np_0(V)
		\text{ and }v\geq\psi\text{ q.e.\ in }V\}.
\]
Then $u$ is said to be a 
\emph{solution of the} 
$\Kb_{\psi,f}(V)$-\emph{obstacle problem} 
if $u\in\Kb_{\psi,f}(V)$ and 
\[
	\int_V g_u^p\,\dmu 
	\leq \int_V g_v^p\,\dmu\ \
	\text{for all }v\in\Kb_{\psi,f}(V).
\]
\end{definition}
They required that 
$\mu(X\setm V)>0$ and merely that $v\geq\psi$ a.e.\ instead of q.e. 
This does not matter if the obstacle $\psi$ is in $\Dploc(V)$, 
since then $v\geq\psi$ a.e.\ implies that $v\geq\psi$ q.e. 
This follows from Corollary~3.3 in Shanmugalingam~\cite{Shanmugalingam00}; 
see also Corollary~1.60 in Bj\"orn--Bj\"orn~\cite{Boken}. 
However, the distinction may be important. 
For example, if $K$ is a compact subset of $V$ such that $\Cp(K)>\mu(K)=0$, 
then the solution of the \smash{$\Kb_{\chi_K,0}(V)$}-obstacle problem 
takes the value $1$ on $K$, 
whereas the solution of the corresponding obstacle problem 
defined by Kinnunen--Martio~\cite{KiMa02} is the trivial solution 
(because their candidate solutions do not ``see'' this obstacle). 
Moreover, it is possible to have no solution of 
the \smash{$\Kb_{\psi,f}(V)$}-obstacle problem when 
there is a solution of the corresponding obstacle problem 
defined by \cite{KiMa02} 
(see, e.g., the discussion following 
Definition~3.1 in Farnana~\cite{Farnana09}).

See also Farnana~\cite{Farnana09}, \cite{Farnana10a}, 
\cite{Farnana10b}, \cite{Farnana11} 
for the double obstacle problem, 
and Bj\"orn--Bj\"orn~\cite{BjBj12a} for obstacle problems on nonopen sets.

\medskip

Now we define our obstacle problem (without the boundedness requirement).
\begin{definition} \label{1-def-obst}
Suppose that $V$ is a nonempty (possibly unbounded) 
open subset of $X$ such that $\Cp(X\setm V)>0$. 
Let $\psi\colon V\to\eR$ and let $f\in\Dp(V)$. 
Define 
\[
	\K_{\psi,f}(V)
	= \{v\in\Dp(V):v-f\in\Dp_0(V)
		\text{ and }v\geq\psi\text{ q.e.\ in }V\}.
\]
We say that $u$ is a 
\emph{solution of the }$\K_{\psi,f}(V)$-\emph{obstacle problem 
\textup{(}with obstacle $\psi$ and boundary values $f$\textup{)}}
if $u\in\K_{\psi,f}(V)$ and 
\[
	\int_V g_u^p\,\dmu 
	\leq \int_V g_v^p\,\dmu\ \
	\text{for all }v\in\K_{\psi,f}(V).
\]
When $V=\Omega$, 
we denote $\K_{\psi,f}(\Omega)$ by $\K_{\psi,f}$ for short.
\end{definition}
Observe that we only define the obstacle problem for $V$ 
with $\Cp(X\setm V)>0$. 
This is because the condition $u-f\in\Dp_0(V)$ 
becomes empty when $\Cp(X\setm V)=0$, 
since then we have $\Dp_0(V)=\Dp(V)$. 

Note also that we solve the obstacle problem 
for boundary data $f\in\Dp(V)$. 
Since such a function is not defined on $\bdy V$,  
we do not really have boundary values, 
and hence the definition should be understood in a weak Sobolev sense. 
\begin{remark}
\label{1-rem-extension} 
If $V$ is bounded, 
then Proposition~2.7 in Bj\"orn--Bj\"orn~\cite{BjBj12a} asserts that 
$\Dp_0(V)=\Np_0(V)$, 
and hence we have 
\smash{$\K_{\psi,f}(V)=\Kb_{\psi,f}(V)$}. 
Thus Definition~\ref{1-def-obst} is a generalization of 
Definition~\ref{1-def-bounded-obst}
to Dirichlet functions and to unbounded sets. 
\end{remark}
The main result in this paper shows that 
the $\K_{\psi,f}$-obstacle problem has a unique solution 
under the natural condition of 
$\K_{\psi,f}$ being nonempty.
\begin{theorem} \label{1-thm-obst-existence-uniqueness} 
Let $\psi\colon\Omega\to\eR$ and let $f\in\Dp(\Omega)$. 
Then there exists a unique 
\textup{(}up to sets of capacity zero\textup{)}
solution of the $\K_{\psi,f}$-obstacle problem 
whenever $\K_{\psi,f}$ is nonempty.
\end{theorem}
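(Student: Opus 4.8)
The plan is to establish uniqueness first, since it is short and standard, and then concentrate on existence via the direct method of the calculus of variations, with Lemma~\ref{1-lem-mazur-consequence} doing the work that weak compactness would do in a reflexive Banach space (note that $\Dp_0(\Omega)$ need not be a nice Banach space here, and $\Omega$ may be unbounded, so we cannot simply quote the bounded-set theory). For \emph{uniqueness}: if $u_1,u_2$ are both solutions, set $v=\tfrac12(u_1+u_2)$. Then $v\in\K_{\psi,f}$ (convexity of the constraint set is immediate: $v-f=\tfrac12(u_1-f)+\tfrac12(u_2-f)\in\Dp_0(\Omega)$ since that space is a vector space, and $v\geq\psi$ q.e.). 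Using that $g_{u_1+u_2}\leq g_{u_1}+g_{u_2}$ a.e., the strict convexity of $t\mapsto t^p$ on $[0,\infty)$ for $p>1$, and minimality, one gets $\int_\Omega g_{u_1}^p = \int_\Omega g_v^p$, which forces $g_{u_1}=g_{u_2}$ a.e.\ and in fact $g_{u_1}=g_{u_2}=\tfrac12(g_{u_1}+g_{u_2})=g_v$ a.e. Then $u_1-u_2\in\Dp_0(\Omega)$ has minimal \pp-weak upper gradient $0$ a.e.\ (by the locality/lattice properties: on $\{u_1>u_2\}$ one has $g_{u_1-u_2}\le g_{u_1}+g_{u_2}$ but also a matching lower bound forcing equality of the pieces), and Maz\cprime ya's inequality (Theorem~\ref{1-thm-Mazyas-ineq}), applied on balls $B$ with $\Cp(B\cap(X\setm\Omega))>0$ — available because $\Cp(X\setm\Omega)>0$ — gives $u_1-u_2=0$ q.e.\ on each such ball, hence q.e.\ on $\Omega$.

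For \emph{existence}, let $I=\inf\{\int_\Omega g_v^p\,d\mu:v\in\K_{\psi,f}\}$, finite and nonnegative since $\K_{\psi,f}\neq\emptyset$. Pick a minimizing sequence $\{u_j\}\subset\K_{\psi,f}$ with $\int_\Omega g_{u_j}^p\to I$; so $\{g_{u_j}\}$ is bounded in $\Lp(\Omega)$. The first genuine point is to get boundedness of $\{u_j\}$ in $\Lp$ \emph{locally} or in a form Lemma~\ref{1-lem-mazur-consequence} can use. I would reduce to zero boundary data by writing $u_j=f+w_j$ with $w_j\in\Dp_0(\Omega)$; then $g_{w_j}\leq g_{u_j}+g_f$ is bounded in $\Lp(\Omega)$, and $w_j$ extends by $0$ to a function in $\Dp_0(\Omega;X)$ (by the definition of $\Dp_0$). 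Now apply Maz\cprime ya's inequality to $w_j$ on any fixed ball $B_0$ with $\Cp(B_0\cap(X\setm\Omega))>0$: since $w_j=0$ on $X\setm\Omega\supset B_0\cap(X\setm\Omega)$, we get $\int_{2B_0}|w_j|^p\leq C\int_{2\lambda B_0}g_{w_j}^p\leq C'$, and then a chaining/telescoping argument over a countable family of such balls covering $\Omega$ (using properness to work on relatively compact pieces) bounds $w_j$ in $\Lploc(\Omega)$, indeed in $\Lp$ on each $\Omega'\Subset\Omega$. That is enough to run Mazur: apply Lemma~\ref{1-lem-mazur-consequence} (after possibly exhausting $\Omega$ by an increasing sequence of open sets $\Omega_1\Subset\Omega_2\Subset\cdots$ and diagonalizing) to obtain convex combinations $v_j=\sum_{i\ge j}a_{j,i}w_i$ with $\tilde g_j=\sum a_{j,i}g_{w_i}$ a \pp-weak upper gradient of $v_j$, a limit function $w\in\Lploc(\Omega)$ with a \pp-weak upper gradient $g\in\Lploc(\Omega)$, $v_j\to w$ q.e., and lower semicontinuity $\int g^p\le\liminf\int g_{w_j}^p$.

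It remains to verify the limit is admissible and optimal. Set $u=f+w$. By construction $v_j\to w$ q.e., so each $v_j+f\to u$ q.e.; since each $v_j+f$ is a convex combination of the $u_i$, and each $u_i\geq\psi$ q.e.\ with $u_i-f\in\Dp_0(\Omega)$, the convex combinations satisfy the same, and q.e.\ convergence preserves $\geq\psi$ q.e.; closedness of $\Dp_0(\Omega)$ under this convergence (again via Maz\cprime ya on balls meeting $X\setm\Omega$, exactly as in Lemma~\ref{1-lem-Dp0-police} / Proposition~\ref{1-prop-Dp0-Om-clOm}) gives $u-f=w\in\Dp_0(\Omega)$. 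Hence $u\in\K_{\psi,f}$. For optimality: $\tilde g_j=\sum a_{j,i}g_{w_i}$ and by convexity of $t\mapsto t^p$, $\int_\Omega\tilde g_j^p\le\sum a_{j,i}\int_\Omega g_{w_i}^p$; combined with $g_{w_i}\le g_{u_i}+g_f$ this does \emph{not} immediately give the bound in terms of $I$, so instead I would apply the lower semicontinuity Corollary~\ref{1-cor-mazur-consequence}-style estimate directly to the minimal upper gradients: $g_u$ is bounded a.e.\ by $g$ (since $g$ is a \pp-weak upper gradient of $w=u-f$, so $g+g_f$ is one of $u$), and on the set $\{w_j>0\}$ vs its complement one uses locality to relate $g_{u_j}$ to $g_{w_j}$; a cleaner route is to note $g_{u_j}=g_{v_j+f}$-type bounds are not needed — instead apply Mazur directly to the sequence $\{u_j\}$ itself on each $\Omega'\Subset\Omega$ (its $u_j$ are $\Lp(\Omega')$-bounded by the above, $g_{u_j}$ globally $\Lp(\Omega)$-bounded), obtaining a limit $u$ with $\int_{\Omega'}g_u^p\le\liminf\int_{\Omega'}g_{u_j}^p\le\liminf\int_\Omega g_{u_j}^p=I$; letting $\Omega'\uparrow\Omega$ via monotone convergence gives $\int_\Omega g_u^p\le I$. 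Since $u\in\K_{\psi,f}$ also gives $\int_\Omega g_u^p\ge I$, equality holds and $u$ solves the obstacle problem. \textbf{The main obstacle} is the interplay between unboundedness of $\Omega$ and the local nature of the $\Lp$-bounds on $u_j$: one must carefully exhaust $\Omega$, diagonalize the Mazur convex combinations so that a single sequence works on every $\Omega'\Subset\Omega$, and then patch the local lower-semicontinuity estimates into a global one — the other steps (convexity, strict convexity for uniqueness, Maz\cprime ya for the zero-boundary-value control) are routine given the results quoted above.
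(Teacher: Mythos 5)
Your uniqueness argument has a genuine gap at its central step. Strict convexity of $\Lp(\Omega)$ only yields that the two scalar minimal \pp-weak upper gradients agree, $g_{u_1}=g_{u_2}$ a.e.; your parenthetical claim that on $\{u_1>u_2\}$ there is ``a matching lower bound forcing equality of the pieces'', whence $g_{u_1-u_2}=0$ a.e., is not a proof and no such lower bound exists. Upper gradients are only subadditive, and the locality/lattice results you invoke control $g_{\max\{u_1,u_2\}}$ and $g_{\min\{u_1,u_2\}}$, not $g_{u_1-u_2}$; already in $\R^n$, $|\nabla u_1|=|\nabla u_2|$ a.e.\ says nothing by itself about $\nabla(u_1-u_2)$. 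This is exactly the nontrivial point, and the paper closes it with Cheeger's truncation device: for each $c\in\R$ the competitor $u=\max\{u',\min\{u'',c\}\}$ lies in $\K_{\psi,f}$ (Lemma~\ref{1-lem-Dp0-police} handles the boundary-value condition), $g_u=0$ a.e.\ on $U_c=\{x\in\Omega:u'(x)<c<u''(x)\}$ while $g_u=g_{u'}=g_{u''}$ a.e.\ on $\Omega\setm U_c$, so minimality of $u'$ forces $g_{u'}=g_{u''}=0$ a.e.\ on $U_c$; taking the union over $c\in\Q$, and arguing symmetrically on $\{u'>u''\}$, gives $g_{u'-u''}\leq(g_{u'}+g_{u''})\chi_{\{u'\neq u''\}}=0$ a.e., after which your Maz\cprime ya step does indeed yield $u'=u''$ q.e. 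Without this (or an equivalent) device the passage from equality of the minimal gradients to vanishing of $g_{u'-u''}$ does not go through.

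The existence outline is essentially the paper's route: reduce to $w_j=u_j-f$ extended by zero, Maz\cprime ya's inequality for $\Lp$-bounds on balls meeting $X\setm\Omega$, Lemma~\ref{1-lem-mazur-consequence} on an exhaustion of the (possibly unbounded) set, convexity/Jensen so that the convex combinations of the $u_k$ remain in $\K_{\psi,f}$ with energies still tending to $I$, and Corollary~\ref{1-cor-mazur-consequence} for lower semicontinuity, followed by letting the exhaustion grow. However, the part you yourself flag as ``the main obstacle''---producing one diagonal sequence of iterated convex combinations whose q.e.\ limits on the balls $nB$ are mutually consistent, converge q.e.\ to a single glued limit $\phi$, and then showing $\phi\in\Dp(X)$ so that $u-f\in\Dp_0(\Omega)$---is precisely the bulk of the paper's existence proof and is only named, not carried out, in your sketch (and your lower-semicontinuity display should be stated along the convex combinations or via the weak limits from the lemma, not along the original minimizing sequence). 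So the existence part is an acceptable but incomplete plan, while the uniqueness part is where the real missing idea lies.
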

The assumption that $X$ is proper is needed 
only in the end of the existence part of the proof. 

In the uniqueness part of the proof, 
we use the fact that $\Lp(\Omega)$ is strictly convex. 
Clarkson~\cite{Clarkson36} introduced the 
notions of strict convexity and uniform convexity 
(the latter being a stronger condition), 
and proved that all $\Lp$-spaces, $1<p<\infty$, are uniformly convex. 
A Banach space $Y$ (with norm $\|\cdot\|$) is \emph{strictly convex} 
if $x=cy$ for some constant $c>0$ 
whenever $x$ and $y$ are nonzero 
and $\|x+y\|=\|x\|+\|y\|$. 
In particular, 
$x=y$ whenever 
$\|x\|=\|y\|=\big\|\frac{1}{2}(x+y)\big\|=1$. 

The idea used in the uniqueness part of the proof 
comes from Cheeger~\cite{Cheeger99}.
\begin{proof}
(Existence.) 
We start by choosing a ball $B\subset X$ such that $\Cp(B\setm\Omega)>0$ 
and $B\cap\Omega$ is nonempty. 
Clearly, we have 
$B\subset 2B\subset 3B\subset\cdots\subset X = \bigcup_{t=1}^\infty tB$.

Let 
\[
	I 
	= \inf_v\int_\Omega g_v^p\,\dmu,
\]
with the infimum taken over all $v\in\K_{\psi,f}$. 
Then $0\leq I<\infty$ as $\K_{\psi,f}$ is nonempty. 
Let $\{u_j\}_{j=1}^\infty\subset\K_{\psi,f}$ 
be a minimizing sequence such that 
\[
	I_j 
	:= \int_\Omega g_{u_j}^p\,\dmu 
	\searrow I\ \
	\text{as }j\to\infty. 
\]
Let $w_j\in\Dp(X)$ be such that $w_j=u_j-f$ in $\Omega$ 
and $w_j=0$ outside $\Omega$, $j=1,2,\dots$\,. 
We claim that both $\{w_j\}_{j=1}^\infty$ and $\{g_{w_j}\}_{j=1}^\infty$ 
are bounded in $\Lp(tB)$ for all $t\geq 1$. 
To show that, 
we first observe that $g_{w_j}\leq(g_{u_j}+g_f)\chi_\Omega$ a.e., 
and hence 
\[
	\|g_{w_j}\|_{\Lp(X)} 
	\leq \|g_{u_j}\|_{\Lp(\Omega)}+\|g_f\|_{\Lp(\Omega)} 
	\leq \|g_{u_1}\|_{\Lp(\Omega)}+\|g_f\|_{\Lp(\Omega)} 
	=: C'
	< \infty.
\]
Let $t\geq 1$ be arbitrary 
and let $S=\bigcap_{j=1}^\infty\{x\in X:w_j(x)=0\}$. 
Then
\[
	\Cp(tB\cap S)
	\geq \Cp(tB\setm\Omega)
	\geq \Cp(B\setm\Omega)
	> 0. 
\]
Maz\cprime ya's inequality (Theorem~\ref{1-thm-Mazyas-ineq}) 
asserts the existence of a constant $C_{tB}>0$ such that
\[
	\int_{2tB}|w_j|^p\,\dmu 
	\leq C^p_{tB}\int_{2\lambda tB}g_{w_j}^p\,\dmu. 
\]
This implies that we also have 
\begin{equation}\label{1-thm-obst-existence-uniqueness-ineq-1} 
	\|w_j\|_{\Lp(tB)}
	\leq C_{tB}\|g_{w_j}\|_{\Lp(X)} 
	\leq C_{tB}C' 
	=: C_{tB}' 
	< \infty,
\end{equation}
and the claim follows.

Consider the ball $B$. 
Lemma~\ref{1-lem-mazur-consequence} asserts that we can find 
a function $\phi_1\in\Lp(B)$ and convex combinations 
\begin{equation}\label{1-thm-obst-existence-uniqueness-eq-1} 
	\phi_{1,j} 
	= \sum_{k=j}^{N_{1,j}}a_{1,j,k}w_k\ \
	\text{in }\Dp(X),\ \
	j=1,2,\dots,
\end{equation}
such that $\phi_{1,j}\to\phi_1$ q.e.\ in $B$ as $j\to\infty$. 
Because $\phi_{1,j}=0$ outside $\Omega$, 
we must have $\phi_1=0$ q.e.\ in $B\setm\Omega$, 
and hence we may choose $\phi_1$ 
so that $\phi_1=0$ in $B\setm\Omega$.
Let $v_{1,j}=f+\phi_{1,j}|_\Omega$. 
Then  
\[
	v_{1,j} 
	= f+\sum_{k=j}^{N_{1,j}}a_{1,j,k}w_k|_\Omega
	= \sum_{k=j}^{N_{1,j}}a_{1,j,k}(f+w_k|_\Omega)
	= \sum_{k=j}^{N_{1,j}}a_{1,j,k}u_k
	\geq \psi\ \
	\text{q.e.\ in }\Omega.
\]
We also have 
\[
	g_{v_{1,j}} 
	\leq \sum_{k=j}^{N_{1,j}}a_{1,j,k}g_{u_k}\ \
	\text{a.e.\ in }\Omega
	\quad\text{and}\quad
	g_{\phi_{1,j}} 
	\leq \sum_{k=j}^{N_{1,j}}a_{1,j,k}g_{w_k}\ \
	\text{a.e.}
\]
A sequence of convex combinations 
of functions taken from a bounded sequence must also 
be bounded, 
and therefore 
we can apply Lemma~\ref{1-lem-mazur-consequence} repeatedly here. 
Hence, 
for every $n=2,3,4,\dots$\,, 
we can find a function $\phi_n\in\Lp(nB)$ 
such that $\phi_n=0$ in $nB\setm\Omega$ 
and convex combinations 
\begin{equation}\label{1-thm-obst-existence-uniqueness-eq-2} 
	\phi_{n,j} 
	= \sum_{k=j}^{N_{n,j}}a_{n,j,k}\phi_{n-1,k}\ \
	\text{in }\Dp(X),\ \
	j=1,2,\dots,
\end{equation}
such that $\phi_{n,j}\to\phi_n$ q.e.\ in $nB$ as $j\to\infty$. 
Let $v_{n,j}=f+\phi_{n,j}|_\Omega$. 
Then 
\[
	v_{n,j} 
	= \sum_{k=j}^{N_{n,j}}a_{n,j,k}(f+\phi_{n-1,k}|_\Omega) 
	= \sum_{k=j}^{N_{n,j}}a_{n,j,k}v_{n-1,k} 
	\geq \psi\ \
	\text{q.e.\ in }\Omega, 
\]
and also  
\[
	g_{v_{n,j}} \leq \sum_{k=j}^{N_{n,j}}a_{n,j,k}g_{v_{n-1,k}}\ \
	\text{a.e.\ in }\Omega
	\quad\text{and}\quad
	g_{\phi_{n,j}} 
	\leq \sum_{k=j}^{N_{n,j}}a_{n,j,k}g_{\phi_{n-1,k}}\ \
	\text{a.e.}
\]

Let $u=f+\phi|_\Omega$, 
where $\phi$ is the function on $X$ defined by 
\[
	\phi(x) 
	= \sum_{n=1}^\infty\phi_n(x)\chi_{nB\setm(n-1)B}(x),\ \
	x\in X.
\]
We shall now show that $u$ is indeed a solution of the 
$\K_{\psi,f}$-obstacle problem. 
To do that, we first establish that $u\in\K_{\psi,f}$, 
and then show that $u$ is a minimizer. 
Because $\phi=u-f$ in $\Omega$ and $\phi=0$ outside $\Omega$, 
it suffices to show that $\phi\in\Dp(X)$ 
in order to establish that $u-f\in\Dp_0(\Omega)$ and $u\in\Dp(\Omega)$.

Consider the diagonal sequences 
$\{v_{n,n}\}_{n=1}^\infty$ and $\{\phi_{n,n}\}_{n=1}^\infty$.
Observe that the latter is bounded in $\Lp(tB)$ for $t\geq 1$, 
since $\|\phi_{n,j}\|_{\Lp(tB)}\leq C_{tB}'$ for all $n$ and $j$, 
by \eqref{1-thm-obst-existence-uniqueness-ineq-1}, 
\eqref{1-thm-obst-existence-uniqueness-eq-1}, 
and \eqref{1-thm-obst-existence-uniqueness-eq-2}.

We claim that $\phi_{n,n}\to\phi$ q.e.\ as $n\to\infty$. 
To prove that, we start by fixing an integer $n\geq 1$ and consider $nB$. 
Then   
\begin{align*}
	|\phi_{n+1}-\phi_n| 
	&\leq |\phi_{n+1}-\phi_{n+1,j}|
		+|\phi_{n+1,j}-\phi_n| \\
	&\leq |\phi_{n+1}-\phi_{n+1,j}|
		+\!\!\sum_{k=j}^{N_{n+1,j}}a_{n+1,j,k}
		|\phi_{n,k}-\phi_n|\to 0  
\end{align*}
q.e.\ in $nB$ as $j\to\infty$. 
Thus $\phi_{n+1}=\phi_n$ q.e.\ in $nB$ for $n=1,2,\dots$\,.

By definition, we have $\phi=\phi_1$ in $B$. 
Now assume that $\phi=\phi_n$ q.e.\ in $nB$ for some positive integer $n$. 
By definition also, 
we have $\phi=\phi_{n+1}$ in $(n+1)B\setm nB$, 
and because $\phi_{n+1}=\phi_n$ q.e.\ in $nB$, 
it follows that $\phi=\phi_{n+1}$ q.e.\ in $(n+1)B$. 
Hence, by induction, 
we have $\phi=\phi_n$ q.e.\ in $nB$ for $n=1,2,\dots$\,.

For $n=1,2,\dots$\,, 
let $E_n$ be the subset of $nB$ where 
$\phi_{n,j}\to\phi_n=\phi$ as $j\to\infty$ 
and let $E=\bigcup_{n=1}^\infty(nB\setm E_n)$.
Then we have $\Cp(E)\leq\sum_{n=1}^\infty\Cp(nB\setm E_n)=0$. 
Let $x\in X\setm E$. 
Clearly, $x\in mB$ 
and $\phi(x)=\phi_m(x)$ for some positive integer $m$. 
Given $\eps>0$, choose a $J$ such that $j\geq J$ implies that  
\[
	|\phi_{m,j}(x)-\phi_{m}(x)| 
	< \eps.
\] 
Assume that for some $n\geq m$, 
we have $|\phi_{n,j}(x)-\phi_{m}(x)|<\eps$ for $j\geq J$.
Then  
\[
	|\phi_{n+1,j}(x)-\phi_m(x)|	
	\leq \sum_{k=j}^{N_{n+1,j}}a_{n+1,j,k}
		|\phi_{n,k}(x)-\phi_m(x)|
	< \eps 
\]
for $j\geq J$. 
By induction, it follows that $|\phi_{n,j}(x)-\phi_m(x)|<\eps$ 
for $n\geq m$ and $j\geq J$, and hence, for $n\geq\max\{m,J\}$, 
we have  
\[
	|\phi_{n,n}(x)-\phi(x)| 
	= |\phi_{n,n}(x)-\phi_m(x)|
	< \eps.
\]
We conclude that $\phi_{n,n}\to\phi$ q.e., 
and also that $v_{n,n}\to u$ q.e.\ in $\Omega$, 
as $n\to\infty$. 

By using Jensen's inequality, we can see that 
\[
	\int_\Omega g_{v_{1,j}}^p\,\dmu 
	\leq \int_\Omega \biggl(
		\sum_{k=j}^{N_{1,j}}a_{1,j,k}g_{u_k}\biggr)^p\dmu 
	\leq \sum_{k=j}^{N_{1,j}}a_{1,j,k}
		\int_\Omega g_{u_k}^p\,\dmu 
	\leq \int_\Omega g_{u_j}^p\,\dmu
\]
and 
\begin{align*}
	\int_X g_{\phi_{1,j}}^p\,\dmu 
	&\leq \int_X \biggl(
		\sum_{k=j}^{N_{1,j}}a_{1,j,k}g_{w_k}\biggr)^p\dmu 
	\leq \sum_{k=j}^{N_{1,j}}a_{1,j,k}
		\int_\Omega(g_{u_k}+g_f)^p\,\dmu \\
	&\leq 2^p\sum_{k=j}^{N_{1,j}}a_{1,j,k}
		\int_\Omega(g_{u_k}^p+g_f^p)\,\dmu 
	\leq 2^p\int_\Omega(g_{u_j}^p+g_f^p)\,\dmu.
\end{align*}
Assume that for some positive integer $n$, it is true that 
\[
	\int_\Omega g_{v_{n,j}}^p\,\dmu
	\leq \int_\Omega g_{u_j}^p\,\dmu
	\quad\text{and}\quad 
	\int_X g_{\phi_{n,j}}^p\,\dmu
	\leq 2^p\int_\Omega(g_f^p+g_{u_j}^p)\,\dmu.
\]
Then 
\begin{align*}
	\int_\Omega g_{v_{n+1,j}}^p\,\dmu
	&\leq \int_\Omega \biggl(
		\sum_{k=j}^{N_{n+1,j}}
		a_{n+1,j,k}g_{v_{n,k}}\biggr)^p \dmu 
	\leq \sum_{k=j}^{N_{n+1,j}}a_{n+1,j,k}
		\int_\Omega g_{v_{n,k}}^p\,\dmu \\
	&\leq \sum_{k=j}^{N_{n+1,j}}a_{n+1,j,k}
		\int_\Omega g_{u_k}^p\,\dmu 
	\leq \int_\Omega g_{u_j}^p\,\dmu
\end{align*}
and 
\begin{align*}
	\int_X g_{\phi_{n+1,j}}^p\,\dmu
	&\leq \int_X \biggl(
		\sum_{k=j}^{N_{n+1,j}}
		a_{n+1,j,k}g_{\phi_{n,k}}\biggr)^p \dmu 
	\leq \sum_{k=j}^{N_{n+1,j}}
		a_{n+1,j,k}\int_X g_{\phi_{n,k}}^p \dmu \\
	&\leq 2^p\sum_{k=j}^{N_{n+1,j}}
		a_{n+1,j,k}\int_\Omega(g_f^p+g_{u_k}^p)\,\dmu 
	\leq 2^p\int_\Omega(g_f^p+g_{u_j}^p)\,\dmu.
\end{align*}
By induction, and letting $j=n$, it follows that 
\[
	\int_\Omega g_{v_{n,n}}^p\,\dmu
	\leq \int_\Omega g_{u_n}^p\,\dmu 
	\quad\text{and}\quad
	\int_X g_{\phi_{n,n}}^p\,\dmu 
	\leq 2^p\int_\Omega (g_f^p+g_{u_n}^p)\,\dmu,\ \
	n=1,2,\dots.
\]

Fix an integer $m\geq 1$. 
Since $\{\phi_{n,n}\}_{n=1}^\infty$ 
and $\{g_{\phi_{n,n}}\}_{n=1}^\infty$ 
are bounded in $\Lp(mB)$ 
and $\phi_{n,n}\to \phi$ q.e.\ in $mB$ as $n\to\infty$, 
Corollary~\ref{1-cor-mazur-consequence} 
asserts that $\phi\in\Np(mB)$. 
This implies that $\phi\in\Dploc(X)$. 
Note that  
$g_\phi$ and $g_{\phi_{n,n}}$ are 
minimal \pp-weak upper gradients of 
$\phi$ and $\phi_{n,n}$, respectively, with respect to $mB$. 
Hence, by Corollary~\ref{1-cor-mazur-consequence} again, 
it follows that 
\begin{align*}
	\int_{mB}g_\phi^p\,\dmu 
	&\leq \liminf_{n\to\infty}\int_{mB}g_{\phi_{n,n}}^p\,\dmu 
	\leq \liminf_{n\to\infty}\int_X g_{\phi_{n,n}}^p\,\dmu \\
	&\leq 2^p\liminf_{n\to\infty}\int_\Omega (g_f^p+g_{u_n}^p)\,\dmu 
	= 2^p\int_\Omega g_f^p\,\dmu + 2^p I.
\end{align*}
Letting $m\to\infty$ yields
\[
	\int_X g_\phi^p\,d\mu
	= \lim_{m\to\infty}\int_{mB} g_\phi^p\,d\mu
	\leq 2^p\int_\Omega g_f^p\,\dmu + 2^p I 
	< \infty,
\]
and hence $\phi\in\Dp(X)$. 
We conclude that $u-f\in\Dp_0(\Omega)$ and $u\in\Dp(\Omega)$.

Let $A_n=\{x\in\Omega:v_{n,n}(x)<\psi(x)\}$ for $n=1,2,\dots$\,,  
and let $A=\bigcup_{n=1}^\infty A_n$. 
Then, since $v_{n,n}\to u$ q.e.\ in $\Omega$ as $n\to\infty$, 
it follows that $u\geq\psi$ q.e.\ in $\Omega\setm A$. 
Because $v_{n,n}\geq\psi$ q.e.\ in $\Omega$, we have $\Cp(A_n)=0$, 
and hence $\Cp(A)=0$ by the subadditivity of the capacity.
Thus $u\geq\psi$ q.e.\ in $\Omega$, and we conclude that $u\in\K_{\psi,f}$.

Proposition~\ref{1-prop-Dploc-Nploc} asserts that $f\in\Nploc(\Omega)$, 
and hence $f\in\Lp(\Omega')$ for all open $\Omega'\Subset\Omega$.
Let 
\[
	\Omega_t 
	= \Bigl\{x\in tB\cap\Omega:
			\inf_{y\in\bdy\Omega}d(x,y)>\delta/t\Bigr\},\ \
	1\leq t<\infty,
\]
where $\delta>0$ is chosen small enough so that $\Omega_1$ is nonempty.
Then we have $\Omega_1\Subset\Omega_2\Subset\cdots\Subset\Omega
	=\bigcup_{t=1}^\infty\Omega_t$. 
Moreover, $\{v_{n,n}\}_{n=1}^\infty$ is bounded in $\Lp(\Omega_t)$, since 
\[
	\|v_{n,n}\|_{\Lp(\Omega_t)} 
	\leq \|\phi_{n,n}\|_{\Lp(\Omega_t)} + \|f\|_{\Lp(\Omega_t)} 
	\leq C_{tB}' + \|f\|_{\Lp(\Omega_t)}<\infty. 
\]

Fix an integer $m\geq 1$. 
Since $\{v_{n,n}\}_{n=1}^\infty$ and $\{g_{v_{n,n}}\}_{n=1}^\infty$ 
are bounded in $\Lp(\Omega_m)$, 
$v_{n,n}\to u$ q.e.\ in $\Omega_m$ as $n\to\infty$, 
and $g_u$ and $g_{v_{n,n}}$ are minimal \pp-weak upper gradients of 
$u$ and $v_{n,n}$, respectively, with respect to $\Omega_m$, 
by Corollary~\ref{1-cor-mazur-consequence}, 
it follows that 
\[
	\int_{\Omega_m}g_u^p\,\dmu 
	\leq \liminf_{n\to\infty}\int_{\Omega_m}g_{v_{n,n}}^p\,\dmu 
	\leq \liminf_{n\to\infty}\int_\Omega g_{v_{n,n}}^p\,\dmu 
	\leq \liminf_{n\to\infty}\int_\Omega g_{u_n}^p\,\dmu
	= I.
\]
Letting $m\to\infty$ completes 
the existence part of the proof by showing that 
\[
	I\leq 
	\int_\Omega g_u^p\,d\mu
	= \lim_{m\to\infty}\int_{\Omega_m} g_u^p\,d\mu
	\leq I.
\]

\medskip

\noindent
(Uniqueness.) 
Suppose that $u'$ and $u''$ are solutions 
to the $\K_{\psi,f}$-obstacle problem. 
We begin this part by showing that $g_{u'}=g_{u''}$ a.e.\ in $\Omega$. 

Clearly, $\frac{1}{2}(u'+u'')\in\K_{\psi,f}$, and hence  
\begin{align*}
	\|g_{u'}\|_{\Lp(\Omega)} 
	&\leq \|g_{\frac{1}{2}(u'+u'')}\|_{\Lp(\Omega)}
	\leq \bigl\|\tfrac{1}{2}(g_{u'}+g_{u''})\bigr\|_{\Lp(\Omega)} \\
	&\leq \tfrac{1}{2}\|g_{u'}\|_{\Lp(\Omega)}
		+\tfrac{1}{2}\|g_{u''}\|_{\Lp(\Omega)}
	= \|g_{u''}\|_{\Lp(\Omega)}
	= \|g_{u'}\|_{\Lp(\Omega)}.
\end{align*}
Thus $g_{u'}=g_{u''}$ a.e.\ in $\Omega$ 
by the strict convexity of $\Lp(\Omega)$. 

Now we show that $g_{u'-u''}=0$ a.e.\ in $\Omega$.
Fix a real number $c$ and let 
\[
	u = \max\{u',\min\{u'',c\}\}.
\]
The following shows that $u\in\K_{\psi,f}$. 
Clearly, $u\in\Dp(\Omega)$. 
Furthermore, we have $u\geq u'\geq\psi$ q.e.\ in $\Omega$, 
and $u-f\in\Dp_0(\Omega)$ by Lemma~\ref{1-lem-Dp0-police}, 
since 
\[
	u-f \leq \max\{u',u''\}-f
	= \max\{u'-f,u''-f\} 
	\in\Dp_0(\Omega)
\]
and $u-f\geq u'-f\in\Dp_0(\Omega)$. 

Let $U_c=\{x\in\Omega:u'(x)<c<u''(x)\}$. 
Then we have $g_u=0$ a.e\ in $U_c$, 
since $U_c\subset\{x\in\Omega:u(x)=c\}$. 
The minimizing property of $g_{u'}$ then implies that 
\[
	\int_\Omega g_{u'}^p\,\dmu
	\leq \int_\Omega g_u^p\,\dmu
	= \int_{\Omega\setm U_c}g_u^p\,\dmu
	= \int_{\Omega\setm U_c}g_{u'}^p\,\dmu,
\]
since $g_u=g_{u'}=g_{u''}$ a.e.\ in $\Omega\setm U_c$. 
Hence $g_{u'}=g_{u''}=0$ a.e.\ in $U_c$ for all $c\in\R$, and because 
\[
	\{x\in\Omega:u'(x)<u''(x)\} \subset \bigcup_{c\in\Q}U_c,
\]
we have $g_{u'}=g_{u''}=0$ a.e.\ in $\{x\in\Omega:u'(x)<u''(x)\}$. 
Analogously, the same is true for $\{x\in\Omega:u'(x)>u''(x)\}$, and hence 
\[
	g_{u'-u''}
	\leq (g_{u'}+g_{u''})
		\chi_{\{x\in\Omega:u'(x)\neq u''(x)\}}
	= 0\ \
	\textup{a.e.\ in }\Omega.
\]

Since $u'-u''=u'-f-(u''-f)\in\Dp_0(\Omega)$, 
there exists $w\in\Dp(X)$ such that 
$w=u'-u''$ in $\Omega$ and $w=0$ outside $\Omega$. 
We have $g_w=g_{u'-u''}\chi_\Omega=0$ a.e.

Let $\widetilde{S}=\{x\in X:w(x)=0\}$ and let $t\geq 1$ be arbitrary. 
Then 
\[
	\Cp(tB\cap\widetilde{S})
	\geq \Cp(tB\setm\Omega)
	\geq \Cp(B\setm\Omega)
	> 0.
\]
Maz\cprime ya's inequality (Theorem~\ref{1-thm-Mazyas-ineq}) applies to $w$, 
and hence there exists a constant $\widetilde{C}_{tB}>0$ such that
\[
	\int_{tB\cap\Omega}|u'-u''|^p\,\dmu 
	\leq \int_{2tB}|w|^p\,\dmu 
	\leq \widetilde{C}_{tB}\int_{2\lambda tB}g_{w}^p\,\dmu 
	= 0.
\]
This implies that $u'=u''$ q.e.\ in $tB\cap\Omega$. 

Let $V_m=\{x\in mB\cap\Omega:u'(x)\neq u''(x)\}$, $m=1,2,\dots$\,, 
and let $V=\bigcup_{m=1}^\infty V_m$. 
Then $u'=u''$ in $\Omega\setm V$. 
Since $\Cp(V_m)=0$ for all $m$, 
the subadditivity of the capacity implies that $\Cp(V)=0$, 
hence $u'=u''$ q.e.\ in $\Omega$.
We conclude that the solution of the $\K_{\psi,f}$-obstacle problem is unique 
(up to sets of capacity zero).
\end{proof}
If $v=u$ q.e.\ in $\Omega$ and $u$ is a solution of the 
$\K_{\psi,f}$-obstacle problem, then so is $v$. 
Indeed, $v=u$ q.e.\ implies that  
$g_u$ is a \pp-weak upper gradient of $v$. 
Thus $v\in\Dp(\Omega)$ and 
$\int_\Omega g_v^p\,\dmu\leq\int_\Omega g_u^p\,\dmu$. 
Clearly, we have $v\geq\psi$ q.e., 
and since Lemma~\ref{1-lem-Dp0-police} asserts that 
$v-f\in\Dp_0(\Omega)$, 
it follows that $v\in\K_{\psi,f}$.

\medskip

The following criterion 
for the existence of a unique solution is easy to prove.
\begin{proposition} \label{1-prop-obst-solve-crit}
Suppose that $\psi$ and $f$ are in $\Dp(\Omega)$.
Then $\K_{\psi,f}$ is nonempty 
if and only if 
$(\psi-f)_\limplus\in\Dp_0(\Omega)$.
\end{proposition}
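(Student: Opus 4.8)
The plan is to establish both implications directly, the only tool beyond the vector‑space and lattice structure of $\Dp(\Omega)$ and $\Dp_0(\Omega)$ (recorded in Section~\ref{1-section-prel}) being the sandwich criterion Lemma~\ref{1-lem-Dp0-police}. Throughout I will freely use that every function in $\Dp(\Omega)$ is finite q.e.\ in $\Omega$: indeed $\Dp(\Omega)\subset\Dploc(\Omega)=\Nploc(\Omega)$ by Proposition~\ref{1-prop-Dploc-Nploc}, and Newtonian functions, being locally in $\Lp$, are finite q.e. This makes the extended‑real arithmetic below legitimate outside a set of capacity zero, which is all that is needed.

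For necessity, I would start from an arbitrary $v\in\K_{\psi,f}$, so that $v-f\in\Dp_0(\Omega)$ and $v\geq\psi$ q.e.\ in $\Omega$. Since $\psi,f\in\Dp(\Omega)$, the lattice property gives $(\psi-f)_\limplus\in\Dp(\Omega)$, and likewise $(v-f)_\limplus=\max\{v-f,0\}\in\Dp_0(\Omega)$; of course $0\in\Dp_0(\Omega)$ as well. From $v\geq\psi$ q.e.\ one gets $0\leq(\psi-f)_\limplus\leq(v-f)_\limplus$ q.e.\ in $\Omega$, so Lemma~\ref{1-lem-Dp0-police}, applied with $u=(\psi-f)_\limplus$, $u_1=0$, and $u_2=(v-f)_\limplus$, yields $(\psi-f)_\limplus\in\Dp_0(\Omega)$.

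For sufficiency, assuming $(\psi-f)_\limplus\in\Dp_0(\Omega)$, I would take $v=\max\{\psi,f\}$. This is everywhere defined on $\Omega$, lies in $\Dp(\Omega)$ by the lattice property, and satisfies $v\geq\psi$ (everywhere, hence q.e.) in $\Omega$. Since $f$ is finite q.e., one has $v-f=(\psi-f)_\limplus$ q.e.\ in $\Omega$, so Lemma~\ref{1-lem-Dp0-police} with $u=v-f$ and $u_1=u_2=(\psi-f)_\limplus\in\Dp_0(\Omega)$ gives $v-f\in\Dp_0(\Omega)$; hence $v\in\K_{\psi,f}$ and $\K_{\psi,f}$ is nonempty. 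I do not anticipate any real obstacle in this proof; the only point requiring a little care is the extended‑real arithmetic when $f$ or $\psi$ takes infinite values, and this is handled uniformly by the finiteness‑q.e.\ observation in the first paragraph.
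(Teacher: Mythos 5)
Your proposal is correct and follows essentially the same route as the paper's proof: necessity via the sandwich $0\leq(\psi-f)_\limplus\leq(v-f)_\limplus$ q.e.\ together with Lemma~\ref{1-lem-Dp0-police}, and sufficiency via the candidate $v=\max\{\psi,f\}$ with $v-f=(\psi-f)_\limplus$. The additional care you take about q.e.\ finiteness (and the extra invocation of Lemma~\ref{1-lem-Dp0-police} in the sufficiency direction) is harmless but not needed in the paper, which asserts the identity $v-f=(\psi-f)_\limplus$ directly.
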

\begin{proof}
Suppose that $\K_{\psi,f}$ is nonempty and 
let $v\in\K_{\psi,f}$. 
Since $(v-f)_\limplus\in\Dp_0(\Omega)$ and  
\[
	0
	\leq (\psi-f)_\limplus
	\leq (v-f)_\limplus\ \ 
	\text{q.e.\ in }\Omega, 
\]
Lemma~\ref{1-lem-Dp0-police} asserts that 
$(\psi-f)_\limplus\in\Dp_0(\Omega)$.

Conversely, 
suppose that $(\psi-f)_\limplus\in\Dp_0(\Omega)$. 
Let $v=\max\{\psi,f\}$. 
Then we have $v\in\Dp(\Omega)$, 
$v-f=(\psi-f)_\limplus$, 
and $v\geq\psi$ in $\Omega$.
Thus $v\in\K_{\psi,f}$.
\end{proof}
The following comparison principle 
(for the version of the obstacle problem defined in 
Kinnunen--Martio~\cite{KiMa02}) 
was obtained in Bj\"orn--Bj\"orn~\cite{BjBj06}. 
Their proof (with trivial modifications) 
is valid also for our obstacle problem. 
\begin{lemma} \label{1-lem-obst-le}
Let $\psi_j\colon\Omega\to\eR$ and 
$f_j\in\Dp(\Omega)$ be such that $\K_{\psi_j,f_j}$ is nonempty\textup{,} 
and let $u_j$ be a solution of the $\K_{\psi_j,f_j}$-obstacle problem 
for $j=1,2$. 
If $\psi_1\leq\psi_2$ q.e\ in\/ $\Omega$ and 
$(f_1-f_2)_\limplus\in\Dp_0(\Omega)$\textup{,} 
then $u_1\leq u_2$ q.e.\ in\/ $\Omega$. 
\end{lemma}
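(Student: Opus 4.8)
The plan is to compare $u_1$ and $u_2$ by means of the lattice operations: I would test the $\K_{\psi_1,f_1}$-obstacle problem with $v_1:=\min\{u_1,u_2\}$ and the $\K_{\psi_2,f_2}$-obstacle problem with $v_2:=\max\{u_1,u_2\}$, verify that these functions are admissible, deduce from the minimizing properties together with the lattice formula for minimal \pp-weak upper gradients that $v_1$ is in fact itself a solution of the $\K_{\psi_1,f_1}$-obstacle problem, and then invoke the uniqueness part of Theorem~\ref{1-thm-obst-existence-uniqueness} to get $v_1=u_1$ q.e., which is precisely $u_1\le u_2$ q.e.\ in $\Omega$.

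The first and principal task is the admissibility of $v_1$ and $v_2$; this is where the hypotheses $\psi_1\le\psi_2$ q.e.\ and $(f_1-f_2)_\limplus\in\Dp_0(\Omega)$ get used. Both $v_1,v_2\in\Dp(\Omega)$ since $\Dp(\Omega)$ is a lattice. The obstacle inequalities are immediate: $v_1\ge\psi_1$ q.e.\ because $u_1\ge\psi_1$ q.e.\ and $u_2\ge\psi_2\ge\psi_1$ q.e., while $v_2\ge u_2\ge\psi_2$ q.e. For the zero boundary values I would apply Lemma~\ref{1-lem-Dp0-police} (with $E=\Omega$) after squeezing. For $v_1$: from above $v_1-f_1=\min\{u_1-f_1,u_2-f_1\}\le u_1-f_1\in\Dp_0(\Omega)$, and from below, using $u_2-f_1=(u_2-f_2)-(f_1-f_2)\ge(u_2-f_2)-(f_1-f_2)_\limplus$, we get $v_1-f_1\ge\min\{u_1-f_1,\,(u_2-f_2)-(f_1-f_2)_\limplus\}$, which lies in $\Dp_0(\Omega)$ because $\Dp_0(\Omega)$ is a vector space and a lattice; hence $v_1-f_1\in\Dp_0(\Omega)$. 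Symmetrically, from below $v_2-f_2=\max\{u_1-f_2,u_2-f_2\}\ge u_2-f_2\in\Dp_0(\Omega)$, and from above $u_1-f_2=(u_1-f_1)+(f_1-f_2)\le(u_1-f_1)+(f_1-f_2)_\limplus$ gives $v_2-f_2\le\max\{(u_1-f_1)+(f_1-f_2)_\limplus,\,u_2-f_2\}\in\Dp_0(\Omega)$, so Lemma~\ref{1-lem-Dp0-police} yields $v_2-f_2\in\Dp_0(\Omega)$. Thus $v_1\in\K_{\psi_1,f_1}$ and $v_2\in\K_{\psi_2,f_2}$. I expect this step to be the main obstacle, since it is the only place the precise form of the assumptions is exploited.

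With admissibility established, put $U=\{x\in\Omega:u_1(x)>u_2(x)\}$. The properties of minimal \pp-weak upper gradients recalled in Section~\ref{1-section-prel} give $g_{v_1}=g_{u_2}\chi_U+g_{u_1}\chi_{\Omega\setm U}$ and $g_{v_2}=g_{u_1}\chi_U+g_{u_2}\chi_{\Omega\setm U}$ a.e.\ in $\Omega$, hence $g_{v_1}^p+g_{v_2}^p=g_{u_1}^p+g_{u_2}^p$ a.e.\ in $\Omega$, all four functions being in $\Lp(\Omega)$. The minimizing property (Definition~\ref{1-def-obst}) gives $\int_\Omega g_{u_1}^p\,\dmu\le\int_\Omega g_{v_1}^p\,\dmu$ and $\int_\Omega g_{u_2}^p\,\dmu\le\int_\Omega g_{v_2}^p\,\dmu$; adding these and using the pointwise identity forces both to be equalities, so $v_1$ attains the energy infimum over $\K_{\psi_1,f_1}$ and is therefore a solution of the $\K_{\psi_1,f_1}$-obstacle problem. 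By the uniqueness part of Theorem~\ref{1-thm-obst-existence-uniqueness}, $v_1=u_1$ q.e.\ in $\Omega$, and since $v_1=\min\{u_1,u_2\}\le u_2$ everywhere in $\Omega$, we conclude $u_1\le u_2$ q.e.\ in $\Omega$.
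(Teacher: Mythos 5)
Your proof is correct and takes essentially the same route as the paper: test with the competitors $\min\{u_1,u_2\}$ and $\max\{u_1,u_2\}$, establish their admissibility via the squeeze Lemma~\ref{1-lem-Dp0-police} together with the hypothesis $(f_1-f_2)_\limplus\in\Dp_0(\Omega)$, compare energies using the lattice formula/locality of minimal \pp-weak upper gradients, and invoke the uniqueness part of Theorem~\ref{1-thm-obst-existence-uniqueness}. The only differences are cosmetic: the paper runs the boundary-value check through $h=u_1-f_1-u_2+f_2$ and the identity $\min\{u_1,u_2\}-f_1=u_2-f_2+\min\{h,f_2-f_1\}$, and gets the energy conclusion by splitting the integrals over $E=\{x\in\Omega:u_2(x)\leq u_1(x)\}$ instead of adding the two minimality inequalities as you do.
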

\begin{proof}
Let $h=u_1-f_1-u_2+f_2$. 
Then $h\in\Dp_0(\Omega)$ and  
\begin{align*}
	-(f_1-f_2)_\limplus - h_\limminus 
	&= -\max\{-(f_2-f_1),0\} - \max\{-h,0\} \\
	&= \min\{f_2-f_1,0\} + \min\{h,0\} 
	\leq \min\{f_2-f_1,h\} 
	\leq h.
\end{align*}
Since $-(f_1-f_2)_\limplus - h_\limminus\in\Dp_0(\Omega)$, 
Lemma~\ref{1-lem-Dp0-police} asserts that  
$\min\{f_2-f_1,h\}\in\Dp_0(\Omega)$. 

Let $u=\min\{u_1,u_2\}$. 
Then $u\in\Dp(\Omega)$, 
and since $u_2\geq\psi_2\geq\psi_1$ q.e.\ in $\Omega$, 
we clearly have $u\geq\psi_1$ q.e.\ in $\Omega$. 
Moreover, since $u_1-f_1=u_2-f_2+h$, 
we have 
\begin{align*}
	u - f_1 
	&= \min\{u_1,u_2\}-f_1 
	= \min\{u_1-f_1,u_2-f_1\} \\
	&= \min\{u_2-f_2+h,u_2-f_1\} 
	= u_2-f_2 + \min\{h,f_2-f_1\}.
\end{align*}
Hence $u-f_1\in\Dp_0(\Omega)$, 
and we conclude that $u\in\K_{\psi_1,f_1}$.

Let $v=\max\{u_1,u_2\}$. 
Then $v\in\Dp(\Omega)$ and $v\geq\psi_2$ q.e.\ in $\Omega$.
As 
\begin{align*}
	v-f_2
	&= \max\{u_1-f_2,u_2-f_2\} 
	= \max\{u_1-f_2,u_1-f_1-h\} \\
	&= u_1-f_1+\max\{f_1-f_2,-h\}  
	= u_1-f_1-\min\{f_2-f_1,h\}, 
\end{align*}
we see that $v-f_2\in\Dp_0(\Omega)$, 
and hence $v\in\K_{\psi_2,f_2}$.

Let $E=\{x\in\Omega:u_2(x)\leq u_1(x)\}$. 
Since $u_2$ is a solution 
of the $\K_{\psi_2,f_2}$-obstacle problem, 
we have  
\[
	\int_\Omega g_{u_2}^p\,\dmu
	\leq \int_\Omega g_v^p\,\dmu 
	= \int_E g_{u_1}^p\,\dmu 
		+\int_{\Omega\setm E}g_{u_2}^p\,\dmu, 
\]
which implies that 
\[
	\int_E g_{u_2}^p\,\dmu 
	\leq \int_E g_{u_1}^p\,\dmu.
\]
By using the last inequality, we see that  
\[
	\int_\Omega g_u^p\,\dmu 
	= \int_E g_{u_2}^p\,\dmu
		+\int_{\Omega\setm E}g_{u_1}^p\,\dmu 
	\leq \int_E g_{u_1}^p\,\dmu 
		+\int_{\Omega\setm E}g_{u_1}^p\,\dmu
	= \int_\Omega g_{u_1}^p\,\dmu. 
\]
Since $u\in\K_{\psi_1,f_1}$ and $u_1$ is a solution 
of the $\K_{\psi_1,f_1}$-obstacle problem, 
this inequality implies that also $u$ is a solution. 
Theorem~\ref{1-thm-obst-existence-uniqueness} 
asserts that $u_1=u$ q.e.\ in $\Omega$, 
and we conclude that $u_1\leq u_2$ q.e.\ in $\Omega$.
\end{proof}
The following local property of solutions 
of the obstacle problem can be useful. 
In some cases it may enable the use of results 
from the theory for bounded sets. 
In this paper, 
we will use it in the proof of Theorems~\ref{1-thm-obst-solve-cont} 
and \ref{1-thm-obst-solve-cont-pw}.
\begin{proposition} \label{1-prop-obst-solve-subset}
Let $\psi\colon\Omega\to\eR$ and 
$f\in\Dp(\Omega)$ be such that $\K_{\psi,f}$ is nonempty\textup{,} 
and let $u$ be a solution of the $\K_{\psi,f}$-obstacle problem. 
Suppose that\/ $\Omega'$ is an open subset of\/ $\Omega$. 
Then $u$ is a solution of the $\K_{\psi,u}(\Omega')$-obstacle problem.

Moreover\textup{,} 
if\/ $\Omega'\Subset\Omega$\textup{,} 
then $u$ is a solution also of the $\Kb_{\psi,u}(\Omega')$-obstacle problem.
\end{proposition}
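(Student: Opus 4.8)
The plan is to reduce the first claim to the minimality of $u$ on all of $\Omega$ by a gluing argument, and then to deduce the ``moreover'' part from Remark~\ref{1-rem-extension}. First note that $\Cp(X\setm\Omega')\geq\Cp(X\setm\Omega)>0$, so the $\K_{\psi,u}(\Omega')$-obstacle problem is well defined; and since $u\in\Dp(\Omega)$ restricts to a function in $\Dp(\Omega')$, $u-u=0\in\Dp_0(\Omega')$, and $u\geq\psi$ q.e.\ in $\Omega\supset\Omega'$, we have $u\in\K_{\psi,u}(\Omega')$. It then remains to verify the minimizing inequality. Let $v\in\K_{\psi,u}(\Omega')$ be arbitrary. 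Since $v-u\in\Dp_0(\Omega')$, there is $h\in\Dp(X)$ with $h=v-u$ on $\Omega'$ and $h=0$ on $X\setm\Omega'$ (in particular on $X\setm\Omega$). Define $\tilde v=u+h$ on $\Omega$; then $\tilde v=v$ on $\Omega'$ and $\tilde v=u$ on $\Omega\setm\Omega'$.

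Next I would check that $\tilde v\in\K_{\psi,f}(\Omega)$: it lies in $\Dp(\Omega)$ as a sum of $\Dp(\Omega)$-functions; $\tilde v-f=(u-f)+h|_\Omega$ lies in $\Dp_0(\Omega)$ since $u-f\in\Dp_0(\Omega)$ and $h=0$ on $X\setm\Omega$; and $\tilde v\geq\psi$ q.e.\ in $\Omega$ because this holds q.e.\ on $\Omega'$ (where $\tilde v=v$) and q.e.\ on $\Omega\setm\Omega'$ (where $\tilde v=u$). Since $u$ solves the $\K_{\psi,f}(\Omega)$-obstacle problem, $\int_\Omega g_u^p\,\dmu\leq\int_\Omega g_{\tilde v}^p\,\dmu$. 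As $\tilde v=u$ on $\Omega\setm\Omega'$, locality of minimal \pp-weak upper gradients gives $g_{\tilde v}=g_u$ a.e.\ there; both are in $\Lp(\Omega)$, so these contributions are finite and cancel, leaving $\int_{\Omega'}g_u^p\,\dmu\leq\int_{\Omega'}g_{\tilde v}^p\,\dmu$. Finally, the restriction of $g_{\tilde v}$ to the open set $\Omega'$ is the minimal \pp-weak upper gradient of $\tilde v|_{\Omega'}=v$ with respect to $\Omega'$, so $g_{\tilde v}=g_v$ a.e.\ in $\Omega'$, and hence $\int_{\Omega'}g_u^p\,\dmu\leq\int_{\Omega'}g_v^p\,\dmu$, which is the desired minimality.

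For the ``moreover'' part, assume $\Omega'\Subset\Omega$, so $\Omega'$ is a nonempty bounded open set with $\Cp(X\setm\Omega')>0$. Since $u\in\Dp(\Omega)\subset\Dploc(\Omega)=\Nploc(\Omega)$ by Proposition~\ref{1-prop-Dploc-Nploc}, and $X$ is proper with $\Omega'\Subset\Omega$, it follows that $u\in\Np(\Omega')$. Thus $\Kb_{\psi,u}(\Omega')$ is defined, and Remark~\ref{1-rem-extension} gives $\K_{\psi,u}(\Omega')=\Kb_{\psi,u}(\Omega')$ with the same \pp-energy functional, so the first part already shows that $u$ solves the $\Kb_{\psi,u}(\Omega')$-obstacle problem.

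The main point to be careful about is the bookkeeping in the gluing step: checking that $\tilde v-f$ genuinely lies in $\Dp_0(\Omega)$, and that the contributions over $\Omega\setm\Omega'$ really cancel (which uses $g_u,g_{\tilde v}\in\Lp(\Omega)$), together with the repeated appeals to the locality of minimal \pp-weak upper gradients and to the fact that their restrictions to open subsets remain minimal. None of this is deep, but it should be carried out cleanly.
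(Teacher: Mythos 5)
Your proof is correct and follows essentially the same route as the paper: extend a competitor $v\in\K_{\psi,u}(\Omega')$ by $u$ on $\Omega\setm\Omega'$, check it lies in $\K_{\psi,f}$, and cancel the (finite) energy contributions over $\Omega\setm\Omega'$ using locality of minimal \pp-weak upper gradients. For the ``moreover'' part the paper only uses the trivial inclusion $\Kb_{\psi,u}(\Omega')\subset\K_{\psi,u}(\Omega')$ together with $u\in\Np(\Omega')$ (via Proposition~\ref{1-prop-Dploc-Nploc}), whereas you invoke the full equality $\K_{\psi,u}(\Omega')=\Kb_{\psi,u}(\Omega')$ from Remark~\ref{1-rem-extension}; both are fine, yours just uses slightly more than is needed.
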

\begin{proof}
Let $\Omega'$ be an open subset of $\Omega$. 
Clearly, $u\in\K_{\psi,u}(\Omega')$. 
Let $v\in\K_{\psi,u}(\Omega')$ be arbitrary. 
To complete the first part of the proof, 
it is sufficient to show that 
\begin{equation} \label{1-prop-obst-solve-subset-ineq}
	\int_{\Omega'}g_u^p\,\dmu 
	\leq \int_{\Omega'}g_v^p\,\dmu.
\end{equation}

Let $E=\Omega\setm\Omega'$ and extend $v$ to $\Omega$ 
by letting $v=u$ in $E$. 
Since $v-u\in\Dp_0(\Omega')$, 
we have $v=(v-u)+u\in\Dp(\Omega)$ and 
$v-f=(v-u)+(u-f)\in\Dp_0(\Omega)$, 
and since $v\geq\psi$ q.e.\ in $\Omega'$ 
and $v=u\geq\psi$ q.e.\ in $E$, 
we conclude that $v\in\K_{\psi,f}$.

Because $u$ is a solution to the $\K_{\psi,f}$-obstacle problem, we have 
\begin{equation} \label{1-prop-obst-solve-subset-ineq2}
	\int_{\Omega'}g_u^p\,\dmu 
	+ \int_E g_u^p\,\dmu 
	= \int_{\Omega}g_u^p\,\dmu 
	\leq \int_{\Omega}g_v^p\,\dmu 
	= \int_{\Omega'}g_v^p\,\dmu 
	+ \int_E g_v^p\,\dmu.
\end{equation}
Since $u=v$ in $E$ implies that $g_u=g_v$ a.e.\ in $E$, we have 
\[
	\int_E g_v^p\,\dmu 
	= \int_E g_u^p\,\dmu 
	\leq \int_\Omega g_u^p\,\dmu 
	< \infty.
\] 
Subtracting the integrals over $E$ in 
\eqref{1-prop-obst-solve-subset-ineq2} 
yields 
\eqref{1-prop-obst-solve-subset-ineq}.

For the second part, 
assume that $\Omega'\Subset\Omega$ and 
let $v\in\Kb_{\psi,u}(\Omega')$ be arbitrary. 
Clearly, \smash{$v\in\K_{\psi,u}(\Omega')$}. 
The first part of the proof asserts that $u$ is a solution of the 
\smash{$\K_{\psi,u}(\Omega')$}-obstacle problem 
and hence \eqref{1-prop-obst-solve-subset-ineq} holds. 
By Proposition~\ref{1-prop-Dploc-Nploc}, 
we have \smash{$u\in\Nploc(\Omega)$}, 
and hence \smash{$u\in\Np(\Omega')$}. 
Thus \smash{$u\in\Kb_{\psi,u}(\Omega')$} 
and the proof is complete.
\end{proof}
There are many equivalent definitions of (super)minimizers in the literature 
(see Proposition~3.2 in A.~Bj\"orn~\cite{BjornA06a}). 
The first definition for metric spaces was given by 
Kinnunen--Martio~\cite{KiMa02}. 
Here we follow 
Bj\"orn--Bj\"orn--M\"ak\"al\"ainen--Parviainen~\cite{BjBjMaPa09}. 
We also follow the custom of not making the dependence on $p$ 
explicit in the notation. 
\begin{definition} \label{1-def-min}
Let $V$ be a nonempty open subset of $X$. 
We say that a function $u\in\Nploc(V)$ 
is a \emph{superminimizer} in $V$ if 
\begin{equation} \label{1-def-min-eq}
	\int_{\phi\neq 0}g_u^p\,\dmu 
	\leq \int_{\phi\neq 0}g_{u+\phi}^p\,\dmu
\end{equation}
holds for all nonnegative $\phi\in\Np_0(V)$.

Furthermore, $u$ is said to be a \emph{minimizer} in $V$ 
if \eqref{1-def-min-eq} holds for all $\phi\in\Np_0(V)$. 
\end{definition}
According to Proposition~3.2 in A.~Bj\"orn~\cite{BjornA06a}, 
it is in fact only necessary to test \eqref{1-def-min-eq} with 
(nonnegative and all, respectively) $\phi\in\Lipc(V)$.

As a direct consequence of Proposition~\ref{1-prop-obst-solve-subset} 
together with Proposition~9.25 in Bj\"orn--Bj\"orn~\cite{Boken}, 
we have the following result.
\begin{proposition} \label{1-prop-obst-super}
Suppose that $u$ is a solution of the $\K_{\psi,f}$-obstacle problem. 
Then $u$ is a superminimizer in\/ $\Omega$.
\end{proposition}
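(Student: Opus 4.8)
The plan is to derive Proposition~\ref{1-prop-obst-super} by combining the localization already established in Proposition~\ref{1-prop-obst-solve-subset} with the known equivalence (for bounded sets) between being a solution of an obstacle problem and being a superminimizer, as recorded in Proposition~9.25 in \cite{Boken}. The key observation is that the superminimizer property in Definition~\ref{1-def-min} is a \emph{local} condition: testing \eqref{1-def-min-eq} against $\phi\in\Np_0(\Omega)$ can, after truncation, be reduced to testing against $\phi$ supported in a ball compactly contained in $\Omega$.

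First I would fix a solution $u$ of the $\K_{\psi,f}$-obstacle problem and note, via Proposition~\ref{1-prop-Dploc-Nploc}, that $u\in\Nploc(\Omega)$, so $u$ is an admissible candidate for being a superminimizer. By the remark after Definition~\ref{1-def-min} (Proposition~3.2 in \cite{BjornA06a}), it suffices to verify \eqref{1-def-min-eq} for all nonnegative $\phi\in\Lipc(\Omega)$. For such a $\phi$, let $\Omega'$ be an open set with $\operatorname{supp}\phi\subset\Omega'\Subset\Omega$ (such an $\Omega'$ exists since $X$ is proper and $\operatorname{supp}\phi$ is compact). Proposition~\ref{1-prop-obst-solve-subset} then tells us that $u$ is a solution of the $\Kb_{\psi,u}(\Omega')$-obstacle problem. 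Now Proposition~9.25 in \cite{Boken} applies on the bounded set $\Omega'$ and yields that $u$ is a superminimizer in $\Omega'$ in the sense of Definition~\ref{1-def-min}; in particular \eqref{1-def-min-eq} holds for our $\phi$, since $\phi\in\Np_0(\Omega')$ and $\{\phi\neq 0\}\subset\Omega'$. As $\phi$ was an arbitrary nonnegative function in $\Lipc(\Omega)$, this establishes that $u$ is a superminimizer in $\Omega$.

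The main point to be careful about is the passage from the obstacle $\psi$ with boundary data $f$ on the big set to the obstacle $\psi$ with boundary data $u$ on the small set $\Omega'$: we must check that $\K_{\psi,u}(\Omega')$, respectively $\Kb_{\psi,u}(\Omega')$, is nonempty so that the cited results are applicable — but this is immediate because $u$ itself lies in both classes (this is exactly what Proposition~\ref{1-prop-obst-solve-subset} gives). A second minor point is to make sure that when we invoke Proposition~9.25 of \cite{Boken} we really do land in Definition~\ref{1-def-min} with the \emph{same} normalization of weak upper gradients, integrating only over $\{\phi\neq 0\}$; since minimal \pp-weak upper gradients restrict well to open subsets, $g_u$ computed in $\Omega'$ agrees a.e.\ with $g_u$ computed in $\Omega$ on $\Omega'$, and likewise $g_{u+\phi}$ agrees, so there is no inconsistency. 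I do not expect any serious obstacle here: the proposition is essentially a packaging of two already-proved facts, and the only work is the short bookkeeping of supports and the choice of the intermediate set $\Omega'$.
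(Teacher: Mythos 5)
Your argument is correct and is essentially the paper's intended proof: the paper derives this proposition exactly as a direct consequence of Proposition~\ref{1-prop-obst-solve-subset} together with Proposition~9.25 in \cite{Boken}, and your write-up just supplies the routine localization details (reduction to nonnegative $\phi\in\Lipc(\Omega)$, choice of $\Omega'\Subset\Omega$ containing the support, and restriction of minimal \pp-weak upper gradients), all of which are sound.
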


\medskip

\section{Lsc-regularized solutions and \pp-harmonic solutions} 
\label{1-section-lsc-harm}
\emph{In this section\textup{,} 
we make the rather standard assumptions that\/ 
$1<p<\infty$\textup{,} 
that $X$ is a complete \pp-Poincar\'e space\textup{,} 
that $\mu$ is doubling\textup{,} 
and that\/ $\Omega$ is a nonempty open subset of $X$ 
such that $\Cp(X\setm\Omega)>0$.}

\medskip

When $\mu$ is doubling, 
it is true that $X$ is proper if and only if $X$ is complete, 
and also that $X$ supports a $(p,p)$-Poincar\'e inequality 
if and only if $X$ supports a \pp-Poincar\'e inequality 
(the necessity follows from H\"older's inequality, 
and the sufficiency was proved in Haj\l{}asz--Koskela~\cite{HaKo95}; 
see also Corollary~4.24 in Bj\"orn--Bj\"orn~\cite{Boken}).
Thus, the difference between this section and the previous 
is that here we make the assumption that $\mu$ is doubling. 

Note that under these assumptions, 
Poincar\'e inequalities are self-improving 
in the sense that 
$X$ supports a $q$-Poincar\'e inequality for some $q<p$ 
(this was proved by Keith--Zhong~\cite{KeZh08}). 
Hence, in this section, 
we make the same assumptions as Kinnunen--Martio~\cite{KiMa02}, 
and we can therefore use Theorems~5.1 and 5.5 in \cite{KiMa02}.

\medskip

\begin{theorem} \label{1-thm-obst-solve-reg} 
Let $\psi\colon\Omega\to\eR$ and let $f\in\Dp(\Omega)$. 
Then there exists a unique lsc-regularized solution of the 
$\K_{\psi,f}$-obstacle problem 
whenever $\K_{\psi,f}$ is nonempty.
\end{theorem}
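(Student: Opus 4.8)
The plan is to obtain this as a consequence of Theorem~\ref{1-thm-obst-existence-uniqueness} together with the Kinnunen--Martio regularity theory for superminimizers (Theorems~5.1 and~5.5 in \cite{KiMa02}), which is available here since in this section $X$ is a complete \pp-Poincar\'e space with $\mu$ doubling. For the existence part, I would first invoke Theorem~\ref{1-thm-obst-existence-uniqueness} to get a solution $u$ of the $\K_{\psi,f}$-obstacle problem, and then Proposition~\ref{1-prop-obst-super} to note that $u$ is a superminimizer in $\Omega$. Setting $u^*(x)=\essliminf_{y\to x}u(y)$ for $x\in\Omega$, the lsc-regularization theory for superminimizers (which is local in nature, so applies on the possibly unbounded $\Omega$; alternatively, apply Proposition~\ref{1-prop-obst-solve-subset} on exhausting open sets $\Omega'\Subset\Omega$ and use \cite[Theorem~5.5]{KiMa02} for the resulting bounded obstacle problems) shows that $u^*$ is lower semicontinuous in $\Omega$ and $u^*=u$ q.e.\ in $\Omega$. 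Finally, the observation recorded right after the proof of Theorem~\ref{1-thm-obst-existence-uniqueness} shows that any function agreeing with $u$ q.e.\ in $\Omega$ is again a solution of the $\K_{\psi,f}$-obstacle problem (in particular $u^*\in\Dp(\Omega)\cap\K_{\psi,f}$ and is a minimizer); hence $u^*$ is such a solution, and it is lsc-regularized by its very construction.

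For uniqueness, suppose $v_1$ and $v_2$ are both lsc-regularized solutions of the $\K_{\psi,f}$-obstacle problem. By the uniqueness part of Theorem~\ref{1-thm-obst-existence-uniqueness}, $v_1=v_2$ q.e.\ in $\Omega$, that is, outside a set of capacity zero, which has $\mu$-measure zero since $\Cp$ is finer than $\mu$. Because the essential lower limit $\essliminf_{y\to x}$ is unchanged when a function is modified on a set of measure zero, the defining property of an lsc-regularized function yields, for every $x\in\Omega$,
\[
	v_1(x)=\essliminf_{y\to x}v_1(y)=\essliminf_{y\to x}v_2(y)=v_2(x),
\]
so $v_1=v_2$ everywhere in $\Omega$, as required.

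I do not expect a genuine obstacle in this argument: the statement is essentially a repackaging of Theorem~\ref{1-thm-obst-existence-uniqueness} with the cited regularization results. The two points that need care are (i) transferring the Kinnunen--Martio regularization theory, formulated in the standard (complete, doubling, Poincar\'e) setting, to a possibly unbounded $\Omega$ --- handled either by the local character of that theory or via Proposition~\ref{1-prop-obst-solve-subset} reducing to bounded subsets --- and (ii) making sure that ``q.e.''\ suffices for the essential-limit computation in the uniqueness argument, which it does precisely because capacity-null sets are $\mu$-null.
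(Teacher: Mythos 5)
Your argument is correct and follows essentially the same route as the paper: existence via Theorem~\ref{1-thm-obst-existence-uniqueness}, superminimizer property via Proposition~\ref{1-prop-obst-super}, then Theorem~5.1 of Kinnunen--Martio to get $u^*=u$ q.e., with uniqueness of the lsc-regularized representative following since lsc-regularized functions equal q.e.\ (hence a.e.) coincide everywhere. The extra details you supply --- that $u^*$ is again a solution by the remark after Theorem~\ref{1-thm-obst-existence-uniqueness}, and that the Kinnunen--Martio regularization is local so boundedness of $\Omega$ is irrelevant --- are exactly the points the paper leaves implicit, so there is nothing to correct.
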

The \emph{lsc-regularization} of a function $u$ is 
the (lower semicontinuous) function $u^*$ defined by 
\[
	u^*(x) 
	:= \essliminf_{y\to x}u(y) 
	:= \lim_{r\to 0}\essinf_{B(x,r)}u.
\]
\begin{proof}
Suppose that $\K_{\psi,f}$ is nonempty. 
Theorem~\ref{1-thm-obst-existence-uniqueness} asserts that there exists 
a solution $u$ of the $\K_{\psi,f}$-obstacle problem 
and that all solutions are equal to $u$ q.e.\ in $\Omega$.  
Proposition~\ref{1-prop-obst-super} asserts that  
$u$ is a superminimizer in $\Omega$, 
and hence by Theorem~5.1 in Kinnunen--Martio~\cite{KiMa02}, 
we have $u^*=u$ q.e.\ in $\Omega$.  
Thus $u^*$ is the unique lsc-regularized 
solution of the $\K_{\psi,f}$-obstacle problem. 
\end{proof}
The following comparison principle improves upon Lemma~\ref{1-lem-obst-le}.
\begin{lemma} \label{1-lem-obst-reg-le}
Let $\psi_j\colon\Omega\to\eR$ and 
$f_j\in\Dp(\Omega)$ be such that $\K_{\psi_j,f_j}$ is nonempty\textup{,} 
and let $u_j$ be the lsc-regularized solution 
of the $\K_{\psi_j,f_j}$-obstacle problem 
for $j=1,2$. 
Then $u_1\leq u_2$ in\/ $\Omega$ 
whenever $\psi_1\leq\psi_2$ q.e\ in\/ $\Omega$ 
and $(f_1-f_2)_\limplus\in\Dp_0(\Omega)$. 
\end{lemma}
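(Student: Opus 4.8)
The plan is to combine the non-regularized comparison principle, Lemma~\ref{1-lem-obst-le}, with the lsc-regularization result from Theorem~\ref{1-thm-obst-solve-reg} (equivalently, Theorem~5.1 in Kinnunen--Martio~\cite{KiMa02}). First I would let $v_j$ denote some solution of the $\K_{\psi_j,f_j}$-obstacle problem (not necessarily lsc-regularized), $j=1,2$; these exist by Theorem~\ref{1-thm-obst-existence-uniqueness}, and by Theorem~\ref{1-thm-obst-solve-reg} we may take $u_j=v_j^*$, with $u_j=v_j$ q.e.\ in $\Omega$. Since the hypotheses $\psi_1\leq\psi_2$ q.e.\ and $(f_1-f_2)_\limplus\in\Dp_0(\Omega)$ are exactly those of Lemma~\ref{1-lem-obst-le}, that lemma gives $v_1\leq v_2$ q.e.\ in $\Omega$, hence $u_1\leq u_2$ q.e.\ in $\Omega$.

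The remaining work is to upgrade ``q.e.'' to ``everywhere'' using lower semicontinuity. The key observation is that if $h_1\leq h_2$ q.e.\ in $\Omega$ for two functions, then $h_1^*\leq h_2^*$ everywhere in $\Omega$, because the essential liminf only sees the function up to sets of measure zero — and sets of capacity zero have measure zero — so $h_j^*$ is unchanged if we modify $h_j$ on a set of capacity zero. Concretely, for each $x\in\Omega$ and $r>0$ small enough that $B(x,r)\subset\Omega$, we have $\essinf_{B(x,r)}v_1\leq\essinf_{B(x,r)}v_2$ since $v_1\leq v_2$ outside a null set; letting $r\to0$ gives $u_1(x)=v_1^*(x)\leq v_2^*(x)=u_2(x)$. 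Thus $u_1\leq u_2$ everywhere in $\Omega$.

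I do not expect a serious obstacle here: the comparison at the q.e.\ level is already handed to us by Lemma~\ref{1-lem-obst-le}, and the passage to an everywhere inequality for lsc-regularized representatives is a routine consequence of the definition of the lsc-regularization together with the fact that $\Cp$-null sets are $\mu$-null. The only point requiring a little care is making sure the solutions whose regularizations are $u_1$ and $u_2$ can be chosen compatibly with the q.e.\ inequality; but since all solutions of a given obstacle problem agree q.e., any choice works, and a finite union of capacity-zero exceptional sets still has capacity zero. The proof is therefore short, essentially a two-line reduction to Lemma~\ref{1-lem-obst-le} plus the observation that lsc-regularization preserves q.e.\ inequalities.
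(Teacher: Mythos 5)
Your proof is correct and follows essentially the same route as the paper: apply Lemma~\ref{1-lem-obst-le} to obtain $u_1\leq u_2$ q.e.\ in $\Omega$, then upgrade to an everywhere inequality using that the $\essliminf$ defining the lsc-regularization is monotone and insensitive to sets of measure (hence capacity) zero. The only difference is cosmetic: the paper applies Lemma~\ref{1-lem-obst-le} directly to $u_1,u_2$ (which are themselves solutions), whereas you pass through auxiliary solutions $v_j$ with $u_j=v_j^*$, which is harmless since all solutions agree q.e.
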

\begin{proof}
By Lemma~\ref{1-lem-obst-le}, 
we have $u_1\leq u_2$ q.e.\ in $\Omega$, 
and since both $u_1$ and $u_2$ are lsc-regularized, 
it follows that 
\[
	u_1(x) 
	= \essliminf_{y\to x}u_1(y) 
	\leq \essliminf_{y\to x}u_2(y) 
	= u_2(x)\ \ 
	\text{for all }x\in\Omega.
	\qedhere
\]
\end{proof}
\begin{definition} \label{1-def-p-harmonic}
Let $V$ be a nonempty open subset of $X$. 
We say that a function $u\in\Nploc(V)$ is \emph{\pp-harmonic} in $V$ 
whenever it is a continuous minimizer in $V$.
\end{definition}

\medskip

Kinnunen and Martio proved that the solution $u$ (if it exists) 
of their obstacle problem 
for bounded sets is continuous in $\Omega$ 
and is a minimizer in the open set $\{x\in\Omega:u(x)>\psi(x)\}$ 
whenever the obstacle $\psi$ is continuous in $\Omega$ 
(Theorem~5.5 in \cite{KiMa02}). 
This is true also for the $\Kb_{\psi,f}(\Omega)$-obstacle problem 
(see, e.g., Theorem~8.28 in Bj\"orn--Bj\"orn~\cite{Boken}), 
and also for our obstacle problem (that allows for unbounded sets).
\begin{theorem} \label{1-thm-obst-solve-cont} 
Let $\psi\colon\Omega\to[-\infty,\infty)$ be continuous and 
$f\in\Dp(\Omega)$ be such that $\K_{\psi,f}$ is nonempty. 
Then the lsc-regularized solution $u$ of the $\K_{\psi,f}$-obstacle problem 
is continuous in\/ $\Omega$ 
and \pp-harmonic in the open set 
$A=\{x\in\Omega:u(x)>\psi(x)\}$. 
\end{theorem}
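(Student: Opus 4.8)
The plan is to reduce the unbounded case to the known bounded case via the localization property established earlier. Specifically, Proposition~\ref{1-prop-obst-solve-subset} tells us that for every open $\Omega'\Subset\Omega$, the lsc-regularized solution $u$ of the $\K_{\psi,f}$-obstacle problem is also a solution of the $\Kb_{\psi,u}(\Omega')$-obstacle problem (with bounded underlying set $\Omega'$). Since $\psi$ is continuous on $\Omega$, it is continuous on $\Omega'$, and we are precisely in the setting of Kinnunen--Martio, so Theorem~5.5 in~\cite{KiMa02} (equivalently Theorem~8.28 in~\cite{Boken}) applies: the lsc-regularized solution on $\Omega'$ is continuous on $\Omega'$. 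We must check that $u$ itself is that lsc-regularized solution on $\Omega'$; this follows because $u$ is already lsc-regularized as a function on $\Omega$ and lsc-regularization is a local operation (the essential liminf at a point $x\in\Omega'$ only sees a small ball around $x$, which lies in $\Omega'$ once $x$ is interior). Then continuity is a local property, so covering $\Omega$ by an increasing sequence of such $\Omega'$ (for instance $\Omega_t$ as in the proof of Theorem~\ref{1-thm-obst-existence-uniqueness}, or simply all $\Omega'\Subset\Omega$, which is possible since $X$ is proper) shows $u$ is continuous on all of $\Omega$.

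For the \pp-harmonicity on $A=\{x\in\Omega:u(x)>\psi(x)\}$: first note $A$ is open because $u$ is continuous and $\psi$ is continuous (with values in $[-\infty,\infty)$, so $u-\psi$ is lower semicontinuous at worst where $\psi=-\infty$, but actually continuous as an extended-real-valued difference wherever $\psi$ is finite, and where $\psi(x)=-\infty$ a whole neighborhood lies in $A$ by continuity of $\psi$). On $A$, I would again invoke the bounded-case result: for $\Omega'\Subset A$ open, $u$ solves the $\Kb_{\psi,u}(\Omega')$-obstacle problem, and Theorem~5.5 in~\cite{KiMa02} says the solution is a minimizer in the set where it strictly exceeds the obstacle, which contains $\Omega'$. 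Hence $u$ is a minimizer in every $\Omega'\Subset A$; being a minimizer is a local property (testing~\eqref{1-def-min-eq} with $\phi\in\Lipc(A)$, whose support lies in some $\Omega'\Subset A$), so $u$ is a minimizer in $A$. Combined with continuity, $u$ is \pp-harmonic in $A$ by Definition~\ref{1-def-p-harmonic}.

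The main obstacle I expect is the bookkeeping around which function plays the role of the ``solution on $\Omega'$'': one needs that the restriction of $u$ to $\Omega'$ is \emph{the} lsc-regularized solution of the bounded obstacle problem there, not merely \emph{a} solution, so that the Kinnunen--Martio continuity/minimality conclusions apply to $u$ rather than to some other q.e.-equivalent representative. This is handled by noting lsc-regularization commutes with restriction to open subsets and that the bounded-case solution is unique up to q.e. with a unique lsc-regularized representative (Theorem~5.1 in~\cite{KiMa02}), so the lsc-regularized solution on $\Omega'$ must equal $u|_{\Omega'}$ q.e., hence everywhere since both are lsc-regularized. A secondary minor point is verifying $A$ is open when $\psi$ can take the value $-\infty$; this is immediate from continuity of $\psi$ as a map into $[-\infty,\infty)$ together with continuity of $u$. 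Everything else is routine exhaustion of $\Omega$ (and of $A$) by relatively compact open subsets, which is available since $X$ is proper.
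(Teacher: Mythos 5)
Your proposal is correct and follows essentially the same route as the paper: localize via Proposition~\ref{1-prop-obst-solve-subset} to sets $\Omega'\Subset\Omega$ (resp.\ $\Subset A$), invoke the bounded-set Kinnunen--Martio theory there, and patch together by locality/exhaustion, which is exactly how the paper proves continuity (via the pointwise Theorem~\ref{1-thm-obst-solve-cont-pw}, citing Theorem~8.29 in \cite{Boken} rather than Theorem~5.5 in \cite{KiMa02}) and $p$-harmonicity on $A$ (exhausting $A$ by sets $A_n$ and using Theorem~5.5 in \cite{KiMa02} plus the locality of $p$-harmonicity). Your extra bookkeeping that $u|_{\Omega'}$ is \emph{the} lsc-regularized solution of the bounded problem is a sound way to handle the point the paper treats implicitly.
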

We also have the following corresponding pointwise result.
\begin{theorem} \label{1-thm-obst-solve-cont-pw} 
Let $\psi\colon\Omega\to[-\infty,\infty)$ and 
$f\in\Dp(\Omega)$ be such that $\K_{\psi,f}$ is nonempty. 
Let $x\in\Omega$. 
Then the lsc-regularized solution $u$ of the $\K_{\psi,f}$-obstacle problem 
is continuous at $x$ 
if $\psi$ is continuous at $x$. 
\end{theorem}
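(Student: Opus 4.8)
The plan is to reduce the pointwise continuity statement of Theorem~\ref{1-thm-obst-solve-cont-pw} to the already-available local theory for bounded sets, exactly as in the proof of Theorem~\ref{1-thm-obst-solve-cont}. First I would fix the point $x\in\Omega$ at which $\psi$ is continuous, and choose a ball $B'=B(x,r)$ with $\itoverline{B'}\subset\Omega$, so that $B'\Subset\Omega$. By Proposition~\ref{1-prop-obst-solve-subset}, the lsc-regularized solution $u$ of the $\K_{\psi,f}$-obstacle problem is also a solution of the $\Kb_{\psi,u}(B')$-obstacle problem, and since $u$ is lsc-regularized on $\Omega$ it is in particular the lsc-regularized solution there (its lsc-regularization computed within $B'$ agrees with the one computed in $\Omega$ on $B'$, because lsc-regularization is a local operation). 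This transports the question to a bounded open set $B'$ where the measure is doubling and a \pp-Poincar\'e inequality holds, so Kinnunen--Martio's results apply.

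Next I would invoke the pointwise continuity result for the bounded obstacle problem. The continuity statement in Theorem~5.5 of Kinnunen--Martio~\cite{KiMa02} (or Theorem~8.28 in Bj\"orn--Bj\"orn~\cite{Boken}) is really proved pointwise: the lsc-regularized solution of the $\Kb_{\psi,f}(V)$-obstacle problem is continuous at any point of $V$ where the obstacle $\psi$ is continuous. Since $\psi$ is continuous at $x$ and $x\in B'$, this gives that $u$ is continuous at $x$ as a function on $B'$, hence as a function on $\Omega$ (continuity at an interior point of $B'$ is a local notion). This is essentially the whole argument; the statement of Theorem~\ref{1-thm-obst-solve-cont-pw} drops the global continuity of $\psi$ on $\Omega$ assumed in Theorem~\ref{1-thm-obst-solve-cont} and replaces it by continuity at the single point $x$, which is all that the local machinery needs.

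The main obstacle — such as it is — is bookkeeping rather than anything deep: one must check that the various notions (lsc-regularization, being a solution, continuity at a point) behave correctly under restriction to the subset $B'$, and in particular that passing to $\Kb_{\psi,u}(B')$ does not change the lsc-regularized representative at $x$. Here one uses that all solutions of a given obstacle problem agree q.e., that lsc-regularization depends only on the essential behaviour near $x$, and Proposition~\ref{1-prop-obst-solve-subset} together with Proposition~\ref{1-prop-Dploc-Nploc} (to ensure $u\in\Np(B')$, so that $u\in\Kb_{\psi,u}(B')$ makes sense). Once these routine compatibilities are in place, the citation to the pointwise version of \cite[Theorem~5.5]{KiMa02} closes the proof; one should double-check that the cited proof indeed establishes continuity at each point of continuity of $\psi$ separately, rather than only under global continuity of $\psi$, but this is standard in the regularity theory for the obstacle problem.
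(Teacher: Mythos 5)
Your argument is essentially the paper's own proof: localize via Proposition~\ref{1-prop-obst-solve-subset} to a relatively compact open subset of $\Omega$ containing $x$ (where $u$ solves the $\Kb_{\psi,u}$-obstacle problem), and then invoke the known pointwise continuity result for the bounded obstacle problem. The only adjustment is the citation for that last step: what you need is the genuinely pointwise statement, Theorem~8.29 in Bj\"orn--Bj\"orn~\cite{Boken} (a special case of Corollary~3.4 in Farnana~\cite{Farnana11}), rather than Theorem~5.5 in Kinnunen--Martio~\cite{KiMa02} or Theorem~8.28 in \cite{Boken}, which assume continuity of $\psi$ on the whole set and whose proofs you would otherwise have to re-examine to justify your claim that they ``are really proved pointwise.''
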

\begin{proof}
Let $x\in\Omega$ and let $\Omega'$ be an open set 
such that $x\in\Omega'\Subset\Omega$. 
Let $u$ be the lsc-regularized solution of the $\K_{\psi,f}$-obstacle problem. 
Proposition~\ref{1-prop-obst-solve-subset} asserts that 
$u$ is a solution of the $\Kb_{\psi,u}(\Omega')$-obstacle problem. 
By Theorem~8.29 in Bj\"orn--Bj\"orn~\cite{Boken} 
(which is a special case of Corollary~3.4 in Farnana~\cite{Farnana11}), 
it follows that $u$ is continuous at $x$.
\end{proof}
\begin{proof}[Proof of Theorem~\ref{1-thm-obst-solve-cont}]
The first part follows directly from Theorem~\ref{1-thm-obst-solve-cont-pw}. 

Now we prove that $u$ is a minimizer in $A$. 
The set $A$ is open since $\psi$ and $u$ are continuous. 
Choose a ball $B\subset A$ and $\delta>0$ small enough so that the sets 
\[
	A_n 
	:= \Bigl\{x\in nB\cap A:
			\inf_{y\in\bdy A}d(x,y)>\delta/n\Bigr\},\ \
	n=1,2,\dots,
\]
are nonempty. 
Then $A_1\Subset A_2\Subset\cdots\Subset A=\bigcup_{n=1}^\infty A_n$. 
Fix a positive integer $n$. 
Since $u$ is a solution of the $\Kb_{\psi,u}(A_n)$-obstacle problem, 
Theorem~5.5 in Kinnunen--Martio~\cite{KiMa02} asserts that $u$ is 
\pp-harmonic in $A_n$. 
From this, 
it follows that $u$ is \pp-harmonic in $A$ 
(see, e.g., Theorem~9.36 in Bj\"orn--Bj\"orn~\cite{Boken}).
\end{proof}
Due to Theorem~\ref{1-thm-obst-solve-cont}, 
the following definition makes sense.
\begin{definition} \label{1-def-ext}
The \emph{\pp-harmonic extension} 
$\Hp_\Omega f$ of a function $f\in\Dp(\Omega)$ to $\Omega$ 
is the continuous solution 
of the $\K_{-\infty,f}(\Omega)$-obstacle problem.
\end{definition}
Then $\Hp_\Omega f$ is the unique \pp-harmonic function in $\Omega$ 
such that $f-\Hp_\Omega f\in\Dp_0(\Omega)$. 
Note that Definition~\ref{1-def-ext} is a generalization of 
Definition~8.31 in Bj\"orn--Bj\"orn~\cite{Boken} 
to Dirichlet functions and to unbounded sets 
(see Remark~\ref{1-rem-extension}).

We conclude that we have solved the Dirichlet problem 
for \pp-harmonic functions 
in open sets with boundary values in $\Dp(\Omega)$ taken in Sobolev sense, 
and we finish the paper by giving a short proof 
of the following comparision principle.
\begin{lemma} \label{1-lem-comp-principle}
Suppose that $f_1$ and $f_2$ are in $\Dp(\Omega)$ 
and that $(f_1-f_2)_\limplus\in\Dp_0(\Omega)$. 
Then $\Hp_\Omega f_1\leq\Hp_\Omega f_2$ in\/ $\Omega$.

The conclusion holds also under the assumption that 
$f_1$ and $f_2$ belong to $\Dp(\overline\Omega)$ 
and that $f_1\leq f_2$ q.e.\ on $\bdy\Omega$.
\end{lemma}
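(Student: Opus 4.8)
The plan is to deduce both parts from the comparison principle for lsc-regularized solutions, Lemma~\ref{1-lem-obst-reg-le}, applied with the trivial obstacles $\psi_1=\psi_2\equiv-\infty$.

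First I would note that $\Hp_\Omega f_j$ is precisely the lsc-regularized solution of the $\K_{-\infty,f_j}(\Omega)$-obstacle problem. The class $\K_{-\infty,f_j}(\Omega)$ is nonempty since it contains $f_j$, so Theorem~\ref{1-thm-obst-solve-reg} produces a unique lsc-regularized solution; taking $\psi\equiv-\infty$ (which is continuous as a map into $[-\infty,\infty)$) in Theorem~\ref{1-thm-obst-solve-cont} shows that this solution is continuous in $\Omega$, and hence, the continuous solution being unique, it coincides with $\Hp_\Omega f_j$ of Definition~\ref{1-def-ext}. With this identification the first assertion is immediate: the hypotheses $\psi_1\le\psi_2$ q.e.\ (here an equality) and $(f_1-f_2)_\limplus\in\Dp_0(\Omega)$ are exactly those of Lemma~\ref{1-lem-obst-reg-le}, whose conclusion is $\Hp_\Omega f_1\le\Hp_\Omega f_2$ everywhere in $\Omega$.

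For the second assertion it suffices to verify that the new hypotheses force $(f_1-f_2)_\limplus|_\Omega\in\Dp_0(\Omega)$; then the first part applies to the restrictions $f_j|_\Omega\in\Dp(\Omega)$. Since $\Dp(\overline\Omega)$ is a lattice, $h:=(f_1-f_2)_\limplus\in\Dp(\overline\Omega)$, and $f_1\le f_2$ q.e.\ on $\bdy\Omega$ gives $h=0$ q.e.\ on $\overline\Omega\setm\Omega=\bdy\Omega$. By the remark following Definition~\ref{1-def-Dp0} (the prescribed zero set may be taken only up to capacity zero), this means $h|_\Omega\in\Dp_0(\Omega;\overline\Omega)$, and Proposition~\ref{1-prop-Dp0-Om-clOm} identifies $\Dp_0(\Omega;\overline\Omega)$ with $\Dp_0(\Omega)$. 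Hence $(f_1-f_2)_\limplus\in\Dp_0(\Omega)$, and the first part finishes the proof.

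The argument is short, and the only real care needed is the bookkeeping: one must remember that $\Hp_\Omega f$ denotes the continuous (equivalently, lsc-regularized) solution, so that it is legitimate to invoke Lemma~\ref{1-lem-obst-reg-le} rather than merely Lemma~\ref{1-lem-obst-le}; this is what upgrades the comparison from ``q.e.\ in $\Omega$'' to ``in $\Omega$''. No genuinely hard step is expected.
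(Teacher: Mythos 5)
Your proof is correct and follows essentially the same route as the paper: the first part is the special case $\psi_1=\psi_2\equiv-\infty$ of Lemma~\ref{1-lem-obst-reg-le}, and the second part reduces to the first by noting $(f_1-f_2)_\limplus\in\Dp(\overline\Omega)$ vanishes q.e.\ on $\overline\Omega\setm\Omega$, hence lies in $\Dp_0(\Omega;\overline\Omega)=\Dp_0(\Omega)$ by Proposition~\ref{1-prop-Dp0-Om-clOm}. Your extra care in identifying $\Hp_\Omega f_j$ with the lsc-regularized solution is a sensible bit of bookkeeping that the paper leaves implicit.
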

The first part is just a special case of 
Lemma~\ref{1-lem-obst-reg-le}.
\begin{proof}
We prove the second part. 
Clearly, 
$(f_1-f_2)_\limplus\in\Dp(\overline{\Omega})$.
Since $f_1\leq f_2$ q.e.\ on $\bdy\Omega$, 
we have $(f_1-f_2)_\limplus=0$ q.e.\ on $\overline{\Omega}\setm\Omega$, 
and hence $(f_1-f_2)_\limplus\in\Dp_0(\Omega;\overline{\Omega})$. 
Since $\Dp_0(\Omega)=\Dp_0(\Omega;\overline\Omega)$ 
according to Proposition~\ref{1-prop-Dp0-Om-clOm}, 
the result follows from the first part.
\end{proof}
%

\newcommand{\bibauthor}[1]{\textsc{#1}}
\newcommand{\bibjtitle}[1]{\textrm{#1}}
\newcommand{\bibbtitle}[1]{\textit{#1}}
\newcommand{\bibjournal}[1]{\textit{#1}}
\newcommand{\bibvol}[1]{\textrm{#1}}
\newlength{\bibindent}
\settowidth{\bibindent}{--- }

\end{document}